\documentclass[12pt,a4paper, twoside, reqno]{amsart} 

\usepackage[margin=2.5cm]{geometry}

\usepackage[T1]{fontenc} 
\usepackage[latin1]{inputenc} 
\usepackage[english]{babel} 

\usepackage{amsmath, amsthm, amsfonts, amssymb, amsbsy, bbm, varioref, enumerate, graphicx, ifpdf, url} 

\parskip=1.4ex

\theoremstyle{definition}

\newtheorem*{defn*}{Definition}

\theoremstyle{remark}
\newtheorem*{rem}{Remark}
\theoremstyle{plain}
\newtheorem{thm}{Theorem}
\newtheorem*{thm*}{Theorem}
\newtheorem{lem}{Lemma}
\newtheorem{prop}{Proposition}

\newtheorem*{thm1a}{Theorem 1a}
\newtheorem*{thm1b}{Theorem 1b}
\newtheorem*{thm2}{Theorem 2}



\newcommand{\norm}[1]{\ensuremath{\left\Vert #1 \right\Vert}}

\newcommand{\infabs}[1]{\ensuremath{\left\vert #1 \right\vert}}





 \newtheorem{corollary}{Corollary}

\newcommand{\mc}{\mathcal}

  \newcommand{\Ha}{\mc{H}}

 \newcommand{\R}{\mathbb{R}}

 \newcommand{\N}{\mathbb{N}}
 
 \newcommand{\Z}{\mathbb{Z}}
  \newcommand{\Nc}{\mathcal{N}_{n,m}}
  \newcommand{\x}{\mathbf{x}}
  \newcommand{\y}{\mathbf{y}}

\title{A problem in non-linear Diophantine approximation}

\author{Stephen Harrap}

\address{S. Harrap: Durham University, Department of Mathematical Sciences, Science Laboratories, South Rd, Durham, DH1 3LE, United Kingdom} \email{s.g.harrap@durham.ac.uk}
\author{Mumtaz Hussain}\thanks{MH's research is supported by the Australian Research Council and La Trobe University's start-up grant.}

\address{M. Hussain, Department of Mathematics and Statistics, La Trobe University, POBox 199, Bendigo, Vic 3552, Australia.}\email{m.hussain@latrobe.edu.au}

\author{Simon Kristensen}

\address{S. Kristensen: Department of Mathematics, Aarhus University, Ny Munkegade 118, 8000 Aarhus C, Denmark}
\email{sik@math.au.dk}

\thanks{SK's research is supported by the Danish Natural Science Research Council.}

\subjclass[2010]{11K60, 11J83, 11J86, 37A45}
\begin{document}
	
	\begin{abstract} In this paper we obtain the Lebesgue and Hausdorff measure results for the set of vectors satisfying infinitely many fully non-linear Diophantine inequalities.  The set is associated with a class of linear inhomogeneous partial differential equations whose solubility depends on a certain Diophantine condition. The failure of the Diophantine condition guarantees the existence of a smooth solution. 
	\end{abstract}
	
	\maketitle
	
	
	\section{Introduction and Statements of results}
	\label{sec:intro}
	
	Metric Diophantine approximation of a single linear form is in a first instance concerned with the (Lebesgue and Hausdorff) measure of the set of vectors $(x_1, \dots, x_k) \in \mathbb{R}^k$ for which there are infinitely many integral vectors $(q_1, \dots, q_k, p) \in \mathbb{Z}^{k+1}$ satisfying the inequality
	\begin{equation}
		\label{eq:1}
		\vert q_1 x_1 + \dots +q_k x_k - p \vert < \psi(H(\mathbf{q})).
	\end{equation}
	 Here, $\psi : \mathbb{N} \rightarrow \mathbb{R}_{>0}$ denotes a monotonic arithmetic function decreasing to zero and $H(\mathbf{q})$ denotes the naive height of the vector $\mathbf{q}$, i.e. $H(\mathbf{q}) = \max\{\vert q_1 \vert, \dots, \vert q_k \vert \}$. This set is well studied. Its Lebesgue measure is determined by the famous zero--one law of Groshev \cite{Gros} and its Hausdorff measure was calculated by Dickinson \& Velani \cite{DV97}. We refer to \cite{BDV06, BV09, BV10_IMRN} for refined modern results in this direction. For obvious reasons $\psi$ is often referred to as an \textit{approximating function}.
	
	 When one places restrictions on the set of integral vectors for which \eqref{eq:1} is required to have infinitely many solutions, the problem is less studied. Constraints on the `denominator' terms $\mathbf{q}$ alone are relatively easily dealt with. For instance, appealing to a result of Schmidt \cite{schmidt}, one can calculate the Lebesgue measure of the above set when coordinates of $\mathbf{q}$ are restricted to values of prescribed polynomials. By appealing to a result of Rynne \cite{R92_2} one may determine the Hausdorff dimension for far more general restrictions. However, both of these works require the `numerator' term $p$ to be restriction free.
	
	Introducing constraints on both the denominator terms and the numerator terms can prove very problematic as many known methods break down. In a series of papers, Harman \cite[Chapter 6 and the references therein]{Har_98} explored approximation of a real numbers by rationals whose numerators and denominators came from pre-specified sets, and in doing so gave definitive answers in the one-dimensional `$k=1$' case.  
	
	For $k > 1$, a linear condition on the numerator term $p$ is easily dealt with by a change of variables and the case that $p$ is restricted to being equal to zero is well studied~\cite{DH13, FHKL15, HK13_2, HL13}. However, to our knowledge the only complete metrical results for non-linear constraints on both numerator and denominator terms known to date is those of \cite{Jones} and~\cite{BDoKL08}, in which the numerators and denominators are all assumed to be primes and perfect squares respectively. The results of \cite{Jones} were later extended (\cite{HJ}, and very recently~\cite{BG}) to intersections with lines and planar curves.  	In this paper, we extend ~\cite{BDoKL08} to a more general setup in which numerators and denominators are required to be fixed, possibly unlike, powers of integers.

	Determining the measure-theoretic properties of the sets described above is not just an exercise in measure and number theory. Indeed, such Diophantine inequalities can be encountered in the study of solutions to certain PDEs and has attracted sustained interest (e.g., see \cite{DGY, DPRY, DV, krist_03, Pet1, Pet2}). It can often be shown that the exceptional set of points where these inequalities fail to hold is small, typically of zero Lebesgue measure. Outside of these sets, the differential equations under consideration are guaranteed to have solutions and so a more acute understanding of the `size' of the exceptional sets becomes a question of real interest.
	
	For example, let $f: \R^3 \rightarrow \R$ be periodic in each of its variables $x_1, x_2$ and $t$, with periods $\alpha$, $\beta$ and $\gamma$ respectively. Assume also that $f$ is a smooth function of each variable. The inhomogeneous wave equation studied in \cite{BDoKL08} is given by the PDE
	\begin{equation}
		\label{eqn:wave}
		\frac{\partial^2 u(\mathbf{x}, t)}{\partial t^2} \, - \, \frac{\partial^2 u(\mathbf{x}, t)}{\partial x_1^2}\, - \, \frac{\partial^2 u(\mathbf{x}, t)}{\partial x_2^2} \, = \, f(\mathbf{x}, t), \quad  \mathbf{x}=(x_1, x_2) \in \R^2, \, t \in \R,
	\end{equation}
	where $u$ is a smooth, periodic solution with the same periods as $f$. 
	It is well known that the smoothness conditions on $f$ are equivalent to the property that it has a Fourier series expansion of the form
	\begin{equation}\label{eq:fourier_series}
		f(\mathbf{x}, t) \, = \, \sum_{(a, b, c)\in \Z^3} \: f_{a, b, c} \, \exp\left( 2\pi i \left[ \frac{a}{\alpha}x_1 + \frac{b}{\beta}x_2 + \frac{c}{\gamma}t \right]\right),
	\end{equation}
	in which the coefficients $f_{a, b, c}$ decay suitably quickly. Any smooth solution  $u$ to (\ref{eqn:wave}) must satisfy a similar Fourier expansion. Upon comparing coefficients one can deduce that a sufficient condition for $u$ to be smooth is that there is no real number $\tau > 1$ such that the Diophantine inequality
	\begin{equation}
		\label{eqn:wavey}
		\infabs{a^2 \frac{\gamma^2}{\alpha^2} + b^2 \frac{\gamma^2}{\beta^2} - c^2} \: < \: \max\left\{\infabs{a}, \infabs{b}\right\}^{-\tau}
	\end{equation}
	holds for infinitely many $(a, b, c)\in \Z^3$ with $(a, b) \neq (0, 0)$. In other words, a solution to~\eqref{eqn:wave} is guaranteed to exist if the ratio of certain functions of the periods of the functions in the PDE are not a set given by \eqref{eq:1} under the condition that $n=2$ and the condition that $q_1, q_2$ and $p$ are all required to be perfect squares.
	
	Of course, one could consider other PDEs, where the wave operator on the left hand side of~\eqref{eqn:wave} is replaced by a differential operator of the form
	\begin{equation*}
	\frac{\partial^p}{\partial t^p} \, - \, \frac{\partial^n}{\partial x_1^n}\, - \, \frac{\partial^m}{\partial x_2^m}.
	\end{equation*}
	This would lead to a more general Diophantine obstruction.
	It is therefore natural to investigate more general inequalities of the form~(\ref{eqn:wavey}) from a metrical point of view. 
	
	In order for our Diophantine results below to be applicable to this situation, we need to impose additional regularity on the inhomogeneous term. Concretely, we can replace the assumption of smoothness by the stronger assumption of having an extension as a holomorphic function of several complex variables $(x_1, x_2, t)$ to a cartesian product of annuli  $D$ containing the torus on which $f$ is defined (recall that periodicity in the three variables is the same as saying that $f$ is defined on a torus). This would imply that $f$ has a Laurent series expansion in $D$ with only finitely many negative exponents in either variable. Restricting this expansion to the torus on which $f$ is defined, we arrive at a Fourier series as before in \eqref{eq:fourier_series}, but now the coefficients $f_{a,b,c}$ are identically zero whenever one of $a$, $b$ or~$c$ is small enough.
	
	Recall from \cite{BDoKL08} that the Diophantine condition of \eqref{eqn:wavey} is only relevant whenever $f_{a,b,c}~\neq~0$, and its predominant purpose is  to ensure that the series expansion for a solution $u$ will converge. This is also the case with the more general partial differential operators considered in this paper. As such, in this new case we need only worry about $a$, $b$ and $c$ being positive when considering the Diophantine condition. As a result, if the ratios of periods are such that for no $\tau > 1$ the analogue of \eqref{eqn:wavey} with the new exponents does not have infinitely many solutions,  we can find a solution to the PDE, and it will satisfy the same regularity as~$f$ did; namely it will have holomorphic continuation to a product of annuli. 
	
	We suspect that the extra regularity assumption is not strictly needed for our results to hold, but the technicalities of the number theoretical arguments would increase in a spectacular way, and we have chosen to make this extra regularity assumption for clarity. Of course, if both $m$ and $n$ are even, our results are applicable with just the assumption of smoothness. Indeed, in this case the condition does not distinguish between positive and negative values of $a$ and $b$, and we may return to the original situation.
	
	One technical condition of our arithmetical result remains, which has an impact on the classes of partial differential operators we can deal with; when $n \neq m$ we require either that $p=1$, or that $n$ and $m$ must have a common non-trivial divisor. This is a real obstruction to the applicability of our result, and one which cannot be overcome by imposing extra assumptions of regularity.

	With the motivations and limitations described above, we now define the sets whose size we are going to calculate.	For any triple $(n, m, p)\in \N^3$ and any approximating function~$\psi$ define $W_{n, m}^p(\psi)$ to be the set of vectors $\mathbf{x}=(x_1, x_2)\in [0,1)^2$ for which the inequality
	$$
	\infabs{a^nx_1 + b^mx_2-c^p} \, < \, \psi(h_{a,b})
	$$
	holds for infinitely many $(a,b,c) \in \N^2 \times \Z_{\geq 0}$. 
	Here, we have assigned a natural height $h_{a,b}:=\max(a^n, b^m)$ to each pair $(a,b)$ of positive integers.  Associated to each approximating function $\psi$, there is a quantity $\lambda_\psi \in \left[0,\infty\right]$ as defined by Dodson \cite {Dodson}, given by 
	$$
	\lambda_\psi : \, = \, \liminf_{r \rightarrow \infty} \frac{- \log\psi(2^r)}{r \log 2};
	$$
	\textit{the lower order of $1/\psi$ at infinity}. This will also be of use to us here.
	
	We provide a Groshev-like criterion for the size of $W_{n, m}^p(\psi)$ in terms of the convergence and divergence of a certain sum. In particular, we prove the following result, in which $\infabs{ \, \cdot \,}$ denotes normalized Lebesgue measure on $[0,1]^2$. 
	The nature of the sets involved depends intricately on the values of $p$, $n$ and $m$. For clarity we split our statement into two parts, $n=m$ and $n\neq m$ respectively, for in the former case the statement is much cleaner.
	
	\begin{thm1a}\label{thm:main} For every approximating function $\psi$ we have that
		$$
		\infabs{ W_{n, n}^p(\psi)} \quad = \quad
		\begin{cases}
		0, & \displaystyle\sum_{q \in \N} \psi(q)/q^{2-\frac 2n-\frac1p} \:  < \, \infty. \\
		& \\
		1, & \displaystyle\sum_{q \in \N} \psi(q)/q^{2-\frac 2n-\frac1p} \:  = \, \infty.
		\end{cases}
		$$
	\end{thm1a}
 For some choices of natural numbers $n, m$ and $p$ the set $W_{n, m}^p(\psi)$ will always be a Lebesgue null set~(see~\S\ref{sec:restrictionsp}). Indeed, when  $n \neq m$ we will assume
\begin{equation}
\label{eqn:pbound}
\frac 1n + \frac 1m + \frac 1p \: > \: 1.
\end{equation}
For, when (\ref{eqn:pbound}) does not hold the set $W_{n, m}^p(\psi)$ is always Lebesgue null in the case of strict inequality, and null for any $\psi$ with $\lambda_\psi>0$ in the case of equality. 
	
	\begin{thm1b}\label{thm:mainb} Fix $n \neq m$ satisfying (\ref{eqn:pbound}). If  either $p=1$ or $\gcd(n,m)\geq 2$, then  
		for every approximating function $\psi$ we have that
		$$
		\infabs{ W_{n, m}^p(\psi)} \quad = \quad
		\begin{cases}
		0, & \displaystyle\sum_{\substack{(a, b)\in \, \N^2}}  \: \frac{\psi(h_{a,b})}{h_{a,b}^{1-1/p}} \:  < \, \infty. \\
		& \\
		1, & \displaystyle\sum_{\substack{(a, b)\in \, \N^2 }}  \: \frac{\psi(h_{a,b})}{h_{a,b}^{1-1/p}} \:  = \, \infty.
		\end{cases}
		$$
	\end{thm1b}
		When $n=m=p=1$ our result coincides with a case of the famous theorem of Groshev~\cite{Gros} and when $n=m=p=2$ the main result of \cite{BDoKL08}. To see that our result contains the latter for their approximating function~$\phi$, note that our height function is defined slightly differently so that $\phi(q) = \psi(q^2)$. Thus, our $\sum_{q \in \N} \psi(q)/q^{1/2}$ is equivalent to the sum $\sum_{q \in \N} \phi(q)$ of \cite{BDoKL08}. 
	\begin{rem}
		Note that when $\gcd(n,m)\geq2$ equation \eqref{eqn:pbound} provides a natural upper bound for $p$ given by $nm/(nm-n-m)$. This would not necessarily be true in  the case of coprimeness.
	\end{rem}

	
	In their own right, Theorems 1a and 1b give no further information on how to distinguish between sets it has determined to have Lebesgue measure zero. 
	Intuitively, the size of $W_{n, m}^p(\psi)$ should still decrease as the rate of approximation governed by the approximating function $\psi$ increases (assuming the sets concerned are non-empty). Hausdorff measure and dimension are the appropriate tools to distinguish amongst such exceptional sets. To this end,  we now provide a general criterion for the size of the set  $W_{n, m}^p(\psi)$ in terms of these tools. Throughout, by a \textit{dimension function} we mean an increasing function $f:\R\to\R$ such that  $f(r)\to 0$ as $r\to 0$. As usual, we denote $f$-dimensional Hausdorff measure  by $\Ha^f$ and Hausdorff dimension by $\dim_H$. For  precise definitions see \S\ref{HM}.
	
	\begin{thm2}\label{thm:HM}
		
		Let $\psi$ be an approximating function and let $p$, $n$ and $m$ be as in Theorems~1a or~1b. 		Let $f$ be a dimension function such that $r^{-2}f(r)$ is monotonic and for notational convenience let $g:r\to r^{-1}f(r)$ be another dimension function. Then, 
		$$
		\Ha ^f\left(W_{n, m}^p(\psi)\right) \quad = \quad
		\begin{cases}
		0, & \displaystyle\sum_{\substack{(a, b)\in \, \N^2}}  \: g\left(\frac{\psi(h_{a,b})}{h_{a,b}}\right)h_{a, b}^{1/p} \:  < \, \infty. \\
		& \\
		\Ha^f\left([0, 1)^2\right), & \displaystyle\sum_{\substack{(a, b)\in \, \N^2 }}  \: g\left(\frac{\psi(h_{a,b})}{h_{a,b}}\right)h_{a, b}^{1/p} \:  = \, \infty.
		\end{cases}
		$$
	\end{thm2}
	
	Armed with this theorem, we are able to extend a result of Dodson~\cite{Dodson} upon setting $f:r\to r^s$ for some $s>0$. Dodson's result \cite{Dodson} corresponds to the case $n=m=p=1$ in the below statement, and $n=m=p=2$ to Corollory~3.3 from \cite{BDoKL08}.
	
		\begin{corollary} \label{cor:HDorder}
			Let $\psi$ be an approximating function and assume that either $p=1$, $n=m=1$ or $\gcd(n,m)\geq 2$. 
			If $0 < \lambda_\psi < \infty$, then
			\[\dim_H\left(W_{n, m}^p \left( \psi\right)\right)=1+\min\left\{1, \: \frac{\frac1n+\frac1m+\frac1p}{\lambda_{\psi}+1}\right\}.\]
		\end{corollary}
	
	
	When $\psi:r\to r^{-\tau}$ for some $\tau>1$, the following Hausdorff dimension statement can readily be obtained. This is in perfect analogue with Corollory~3.4 from \cite{BDoKL08}. 
	
	\begin{corollary} \label{cor:HD}
	Assume that either $p=1$, $n=m=1$ or $\gcd(n,m)\geq 2$.		Let $\tau>0$, then
		\[\dim_H\left(W_{n, m}^p \left( \psi:r\to r^{-\tau}\right)\right)=1+\min\left\{1, \: \frac{\frac1n+\frac1m+\frac1p}{\tau+1}\right\}.\]
	\end{corollary}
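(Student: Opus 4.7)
The plan is to recognise this statement as an immediate specialisation of Corollary HDorder to the one-parameter family $\psi(r) = r^{-\tau}$, so I will simply verify the hypothesis of that corollary and evaluate the resulting formula.

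First, I would compute the exponent $\lambda_\psi$ attached to $\psi(r) = r^{-\tau}$. Substituting into the defining $\liminf$,
\[
\lambda_\psi = \liminf_{r\to\infty}\frac{-\log\psi(2^r)}{r\log 2} = \liminf_{r\to\infty}\frac{\tau r\log 2}{r\log 2} = \tau,
\]
which is finite because $\tau$ is a fixed real number greater than $1$. Thus the hypothesis $\lambda_\psi<\infty$ of Corollary HDorder is satisfied, and the assumption on the triple $(n,m,p)$ transfers verbatim.

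Substituting $\lambda_\psi=\tau$ into the conclusion of Corollary HDorder then yields precisely
\[
\dim_H\bigl(W_{n,m}^p(\psi)\bigr) = 1 + \min\left\{1,\ \frac{\frac1n+\frac1m+\frac1p}{\tau+1}\right\},
\]
which is the claim. There is no substantive obstacle in this reduction: all the Hausdorff measure-theoretic work has already been invested in Theorem~\ref{thm:HM} and its specialisation to $f(r)=r^s$ used to derive Corollary~\ref{cor:HDorder}. Should one prefer to avoid invoking Corollary~\ref{cor:HDorder}, the alternative plan is to apply Theorem~\ref{thm:HM} directly with the dimension function $f(r)=r^s$ (for which $r^{-2}f(r)=r^{s-2}$ is monotonic), reduce the resulting series to
\[
\sum_{(a,b)\in\N^2} h_{a,b}^{-(s-1)(\tau+1)+1/p},
\]
and invoke the standard counting estimate $\#\{(a,b)\in\N^2:h_{a,b}\leq H\}\asymp H^{1/n+1/m}$ to locate the critical exponent $s=1+\frac{1/n+1/m+1/p}{\tau+1}$; combining with the trivial upper bound $\dim_H\leq 2$ would then recover the stated formula.
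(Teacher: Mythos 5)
Your proposal is correct and takes essentially the same route as the paper: the paper deduces Corollary~\ref{cor:HD} from Theorem~\ref{thm:HM} via ``standard arguments,'' and specialising Corollary~\ref{cor:HDorder} by the (correct) computation $\lambda_\psi=\tau$ for $\psi(r)=r^{-\tau}$ is exactly that argument.
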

	In fact, Theorem~2 reveals more than just the Hausdorff dimension of the sets concerned. It also implies, for example, that  $\mc H^s\left(W_{n, m}^p \left( \psi:r\to r^{-\tau}\right)\right)=\infty$ at the critical exponent $s=\dim_H\left(W_{n, m}^p \left( \psi:r\to r^{-\tau}\right)\right)$.

	
	\section{Proof of Theorems 1a \& 1b}
	\label{sec:ProofOfTheoremRefThmMain}

	We will use the following notation throughout the proof. Fix any two natural numbers $a$ and $b$. Then, for every $c \in \Z_{\geq 0}$ let
	$$
	\ell_{a,b}(c): \, =\, \left\{(x, y) \in [0,1)^2: \, \infabs{a^nx+b^my-c^p} < \psi(h_{a,b})\right\}
	$$
	and in turn let
	$$
	\ell_{a,b} \, = \, \bigcup_{c \, \in \, \Z_{\geq 0}}\ell_{a,b}(c).
	$$
	Each set $\ell_{a,b}(c)$ is simply a `strip' in $[0,1)^2$ consisting of a segment of a certain~neighbourhood of the line $y=(-a^n/b^m)x+c^p/b^m$. The set $\ell_{a,b}$ is the disjoint union as $c$ runs over $\Z_{\geq 0}$ of all such strips which are non-empty. Moreover, this notation gives us a very convenient way of expressing the set $W_{n,m}^p(\psi)$. Indeed, we have
	$$
	W_{n,m}^p(\psi) \, = \, \left\{(x, y) \in [0,1)^2: \, (x,y) \in \ell_{a,b} \text{ for infinitely many pairs } (a,b) \in \N^2 \right\}.
	$$
	
	As is now commonplace in number theory we will often appeal to Vinogradov notation rather than `big O' notation to allow for neatness of exposition. For the unfamiliar reader we mean by $f \ll g$ that $f(t)=\mathcal{O}(g(t))$ as $t \rightarrow \infty$ and by $f \asymp g$ that both $f \ll g$ and $f \gg g$. We consider $n,m$ and $p$ to be fixed throughout the proof and any implied constants may depend on these integers alone.

	We begin the proof of Theorem \ref{thm:main} by dealing with the case when the volume sum converges. As is usual for results of this type, it takes the form of a simple covering argument.

	\subsection{Proof of the convergence part}
	\label{sec:conv}
	
	For any fixed $c$ it is easily verified that the strip $\ell_{a,b}(c)$ has measure at most $2\sqrt{2}\psi(h_{a,b})/\sqrt{a^{2n}+b^{2m}} \ll \psi(h_{a,b})/h_{a,b}$ and a simple calculation yields that there are at most $2h_{a,b}^{1/p}+1$ non-empty strips in the union $\ell_{a,b}$. Hence,
	$$
	\infabs{\ell_{a, b}} \, \ll \frac{\psi(h_{a, b})}{h_{a, b}^{1-1/p}}. 
	$$
	Furthermore, the function $h_{a,b}: \N^2 \rightarrow \N$ only takes values which are $n$-th or $m$-th powers, and so for any natural number $h$ not of this form we have  
	$$
	\displaystyle\bigcup_{\substack{(a, b)\in \, \N^2 \\ h_{a,b} =h}}\ell_{a,b} \: = \: \emptyset.
	$$
	Therefore, 
	\begin{eqnarray*}
		\sum_{h=1}^\infty \: \infabs{\displaystyle\bigcup_{(a, b)\in \, \N^2, \: h_{a,b} =h} \, 
			\ell_{a,b}} 
		& = & \sum_{g_1=1}^\infty \: \infabs{\displaystyle\bigcup_{b^m \, \leq \, g_1^n} \, 
			\ell_{g_1,b}} \: + \: \sum_{g_2=1}^\infty \: \infabs{\displaystyle\bigcup_{a^n \, < \, g_2^m} \, 
			\ell_{a,g_2}} \\
		& \ll &  \sum_{g_1=1}^\infty \:\displaystyle\sum_{\substack{b\in \, \N \\ b^m \leq g_1^n}} \, \frac{\psi(g_1^n)}{g_1^{n(1-1/p)}} \: + \: 
		\sum_{g_2=1}^\infty \: \displaystyle\sum_{\substack{a\in \, \N \\ a^n < g_2^m}} \, \frac{\psi(g_2^m)}{g_2^{m(1-1/p)}} \\
		& = & \displaystyle\sum_{\substack{(a,b)\in \, \N^2 \\ h_{a,b}=a^n}} \, \frac{\psi(h_{a,b})}{h_{a,b}^{1-1/p}} \: + \: 
		\displaystyle\sum_{\substack{(a,b)\in \, \N^2 \\ h_{a,b}=b^m > a^n}} \, \frac{\psi(h_{a,b})}{h_{a,b}^{1-1/p}}  \\
		&=& \displaystyle\sum_{(a,b)\in \, \N^2} \, \frac{\psi(h_{a,b})}{h_{a,b}^{1-1/p}} \quad < \quad \infty. \
	\end{eqnarray*}
	Since 
	$$
	W_{n,m}^p(\psi) \: = \: \bigcap_{g=1}^\infty \bigcup_{h=g}^\infty \left(  \bigcup_{(a, b)\in \, \N^2, \: h_{a,b} =h}	\ell_{a,b} \right),
	$$
	it follows from the `convergence part' of the famous Borel-Cantelli lemma in probability theory that $W_{n, m}^p(\psi)$ is a null set as required. Note that this holds for \textit{any} choice of $n$, $m$ and $p$, not only those satisfying the conditions of Theorem 1b.

	\subsection{Preliminaries for the proof of the divergence part}
	
	Proving the validity of Theorems 1a and 1b for the case when the volume sum
	\begin{equation}\label{eqn:volsum}
		\displaystyle\sum_{\substack{(a, b)\in \, \N^2}} \: \frac{\psi(h_{a,b})}{h_{a,b}^{1-1/p}}
	\end{equation}
	diverges constitutes the main difficulty in the proof as a whole and will require some very delicate calculations. As such, before we proceed we are first required to provide some auxiliary lemmas and outline some important general observations.

\subsubsection{An important observation}
\label{sec:restrictionsp}

Since $\psi(r)\rightarrow 0$ as $r \rightarrow \infty$ we may assume that $\psi(r) < 1$ for $r$ sufficiently large. 
Observe that we cannot have
\begin{equation} \label{badbound}
	\frac 1n + \frac 1m + \frac 1p \: < \: 1,
\end{equation}
for otherwise it would follow that
$$
\min\left\{n(1-1/p-1/m),\: m(1-1/p-1/n)\right\} \: > \: 1,
$$
and for any sufficiently large $H \in \N$ we would have
\begin{eqnarray*}
\displaystyle\sum_{\substack{(a,b)\in \, \N^2: \\ h_{a,b} \geq H}} \, \frac{\psi(h_{a,b})}{h_{a,b}^{1-1/p}} & = & \sum_{g_1\geq \left\lfloor H^{1/n} \right\rfloor} \:\displaystyle\sum_{\substack{b\in \, \N \\ b^m \leq g_1^n}} \, \frac{\psi(g_1^n)}{g_1^{n(1-1/p)}} \: + \: 
 \sum_{g_2 \geq\left\lfloor H^{1/m} \right\rfloor} \: \displaystyle\sum_{\substack{a\in \, \N \\ a^n < g_2^m}} \, \frac{\psi(g_2^m)}{g_2^{m(1-1/p)}}\\
 & \ll & 
  \sum_{g_1\geq \left\lfloor H^{1/n} \right\rfloor} \: \frac{ g_1^{n/m} }{g_1^{n(1-1/p)}} \: + \: 
  \sum_{g_2\geq \left\lfloor H^{1/m} \right\rfloor} \: \frac{g_2^{m/n} }{g_2^{m(1-1/p)}} \quad < \: \infty,\
\end{eqnarray*}
a contradiction. Moreover, in view of \S\ref{sec:conv} this observation implies that the set $W_{n, m}^p(\psi)$ has measure zero for every approximating function $\psi$ whenever \eqref{badbound} holds.
One can readily verify this is also the case when we have equality in (\ref{badbound}) and $\lambda_\psi \neq 0$. We therefore proceed on the assumption that (\ref{eqn:pbound}) holds when  $n \neq m$, as in Theorem~1b.

\subsubsection{Restrictions on the integers $(a,b)$}
\label{sec:restrictions}

It will be imperative to our proof that we exclude a certain class of integer pairs $(a,b) \in \N^2$ from our calculations. Firstly, we will exclude those pairs for which $a$ and $b$ are not coprime. The reason for this is that it will guarantee that the strip $\ell_{a,b}$ is not parallel to any other strip we might consider. In view of the method outlined in \S\ref{sec:lemmas} this will be very important, as otherwise the intersection of any two such strips may be very large. 

Secondly, we will assume that the resonant lines at the centre of all our strips lie in some `cone'; that is, the angle of incline of each strip $\ell_{a,b}$ is neither too steep or too flat. The reason for this assumption will become apparent as our proof progresses. 
To be precise, for the rest of the proof we will work exclusively with pairs $(a,b) \in \Nc$, where $\Nc \subset \N^2$ denotes the set of pairs $(a,b) \in \N^2$ satisfying the conditions
\begin{equation} \label{eqn:restrictions}
	\gcd(a, b)=1, \quad\quad \frac{1}{2} \, < \, \frac{a^n}{b^m} \, < \, 2.
\end{equation}

Before we proceed we must first ensure that this thinning out of the sequence of sets $\ell_{a,b}$ does not effect the implication of our proof. To see that it does not, notice that the set 
$$
V_{n,m}^p(\psi) \, = \, \left\{(x, y) \in [0,1)^2: \, (x,y) \in \ell_{a,b} \text{ for infinitely many pairs } (a,b) \in \Nc \right\}
$$
is a subset of $W_{n,m}^p(\psi)$. Therefore, if we can prove that $V_{n,m}^p(\psi)$ has full measure then it will readily follow that $W_{n,m}^p(\psi)$ also enjoys this property. Our proof of Theorems 1a and 1b would then be complete modulo the following proposition, which demonstrates that the sequence of strips $\ell_{a,b}$ for $(a,b) \in \Nc$ is `rich' enough to entirely determine whether the volume sum (\ref{eqn:volsum}) diverges.

\begin{prop}
\label{prop:restrict}
	For any approximating function $\psi$ and any triple $(n,m,p) \in \N^3$ we have
	$$
	\sum_{\substack{(a, b)\in \, \N^2 }}  \: \frac{\psi(h_{a, b})}{h_{a, b}^{1-1/p}} \: = \: \infty \quad  \quad\Longleftrightarrow \quad \quad
	\displaystyle\sum_{(a, b)\in \, \Nc} \, \frac{\psi(h_{a, b})}{h_{a, b}^{1-1/p}}  \: = \: \infty.$$
\end{prop}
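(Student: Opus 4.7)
The direction $\sum_{\Nc}=\infty\Rightarrow\sum_{\N^2}=\infty$ is immediate since $\Nc\subset\N^2$ and every summand is non-negative. I would therefore prove the converse by showing that the two restrictions defining $\Nc$---the cone condition $\tfrac12<a^n/b^m<2$ and the coprimality condition $\gcd(a,b)=1$---each only reduce the sum by a bounded multiplicative factor.

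I would first handle the cone restriction by organizing the sum according to the value $h=h_{a,b}$. The pairs with $h_{a,b}=g^n$ are exactly those of the form $(g,b)$ with $b^m\leq g^n$, numbering $\asymp g^{n/m}$; of these, the pairs lying in the cone correspond to $b^m\in(g^n/2,g^n]$ and number $\asymp(1-2^{-1/m})g^{n/m}$, a positive constant fraction. The same happens symmetrically for $h_{a,b}=g^m$. Hence the sums over $\N^2$ and over the cone differ by at most a multiplicative constant depending only on $n$ and $m$.

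To handle coprimality, I would focus on the half of the cone with $a^n\geq b^m$ (the other half is symmetric). For each fixed $a=g$, the standard density estimate counts the $b\in\bigl((g^n/2)^{1/m},g^{n/m}\bigr]$ coprime to $g$ as $\asymp g^{n/m}\phi(g)/g+O(d(g))$, and the divisor error term is negligible after summation. The cone-plus-coprime sum is therefore
\[
\asymp \sum_{g\geq 1} g^{n/m-1}\phi(g)\,\frac{\psi(g^n)}{g^{n(1-1/p)}},
\qquad\text{compared with the cone sum }\quad \asymp \sum_{g\geq 1} g^{n/m}\,\frac{\psi(g^n)}{g^{n(1-1/p)}}.
\]
To show these two sums are comparable I would apply Abel summation, taking $\psi$ as the monotonic factor and absorbing the polynomial weights into the partial sums $\sum_{g\leq N}g^{\alpha}\asymp \sum_{g\leq N}g^{\alpha-1}\phi(g)$, an asymptotic that follows in turn from the classical estimate $\sum_{g\leq N}\phi(g)\asymp N^{2}$.

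The main obstacle is precisely this last comparison, because the full summand $g^{n/m}\psi(g^n)/g^{n(1-1/p)}$ is in general \emph{not} monotonic in $g$ and so no off-the-shelf ``arithmetic factor times monotonic weight'' lemma applies directly. The fix is to arrange the Abel summation so that it only uses the monotonicity of $\psi$, with the polynomial and $\phi$-weights compared via the asymptotics recalled above. A small case analysis according to the sign of $n/m+n/p-n-1$ is then needed to cover the borderline exponents where $\sum g^{\alpha}$ is on the edge of summability, but in those regimes one checks directly that both sums in the Proposition converge (or diverge) simultaneously, completing the equivalence.
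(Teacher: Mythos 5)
Your proposal is conceptually correct and relies on the same arithmetic ingredients as the paper (the cone accounting for a constant fraction of pairs, coprimality counted via M\"obius/Euler-$\varphi$ with divisor-function error terms, and the growth $\sum_{g\le N}\varphi(g)\asymp N^2$), but the organization is genuinely different. Where you propose to factor out the monotonicity of $\psi$ by Abel summation and then compare the arithmetic partial sums $\sum_{g\le N}g^{\alpha-1}\varphi(g)$ and $\sum_{g\le N}g^{\alpha}$, the paper instead partitions $\Nc$ into dyadic blocks $A_t=\{(a,b)\in\Nc:\ a^n>b^m,\ 2^t\le a<2^{t+1}\}$ (and symmetrically $B_t$), shows $|A_t|\asymp 2^{(n/m+1)t}$ using the same two arithmetic lemmas, and then uses the monotonicity of $\psi$ only at the dyadic scale $\psi(g^n)\ge\psi(2^{(t+1)n})$ for $g<2^{t+1}$, finishing with a Cauchy-condensation step. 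The dyadic route buys you a cleaner treatment of the divisor error, since $\sum_{2^t<g\le2^{t+1}}\sigma(g)\ll t2^t$ is automatically lower-order against the main term $2^{(n/m+1)t}$, and it sidesteps the boundary exponent analysis entirely: within a block the polynomial weight is effectively constant, so no comparison of partial sums across different regimes of $\alpha=\tfrac{n}{m}+\tfrac{n}{p}-n$ is ever required. Your route is viable but demands the slightly delicate lemma $\sum_{g\le N}g^{\alpha-1}\varphi(g)\asymp\sum_{g\le N}g^\alpha$ uniformly in $N$ for all exponents down to $\alpha=-1$, which requires an extension of the paper's Lemma~\ref{lem:rest2} (stated only for $z>0$); one should also note that the relevant threshold parameter in the case analysis is $\alpha+1=\tfrac{n}{m}+\tfrac{n}{p}-n+1$ rather than $\tfrac{n}{m}+\tfrac{n}{p}-n-1$ as you wrote, since the sum $\sum g^\alpha$ has its convergence boundary at $\alpha=-1$.
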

To prove this proposition we will require the following two lemmas (which may be deduced from standard arithmetic identities, e.g., see  \cite{Apo76}). Throughout, $\varphi$ will denote Euler's totient function and $\sigma$ will denote the divisor function.
\begin{lem}\label{lem:rest1}
	Choose a fixed natural number $t$, and then for any other natural number $Q$ denote by $\gamma_t(Q)$ the cardinality of the set $\left\{q \leq Q: \, \gcd(t,q)=1 \right\}.$ Then, 
	$$
		\gamma_t(Q) = \frac{\varphi(t)}{t}Q \, + \epsilon_t(Q),
	$$	
	where $\epsilon_t(Q):\N \rightarrow \R$ is an error function satisfying  $|\epsilon_t(Q)| \leq \sigma(t)$ for every $Q\in\N$. 
\end{lem}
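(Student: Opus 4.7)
The claim is a classical Möbius inversion computation, and the main obstacle is essentially bookkeeping rather than a genuine difficulty.

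The plan is to start from the Möbius identity
\[
\sum_{d \mid \gcd(t,q)} \mu(d) \; = \; \begin{cases} 1, & \gcd(t,q)=1,\\ 0, & \text{otherwise,}\end{cases}
\]
so that
\[
\gamma_t(Q) \; = \; \sum_{q=1}^{Q} \sum_{d \mid \gcd(t,q)} \mu(d) \; = \; \sum_{d \mid t} \mu(d) \sum_{\substack{q \leq Q \\ d \mid q}} 1 \; = \; \sum_{d \mid t} \mu(d) \left\lfloor \frac{Q}{d} \right\rfloor,
\]
where I have swapped the order of summation using the fact that $d \mid \gcd(t,q)$ if and only if $d\mid t$ and $d\mid q$.

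Next I would separate the main term from the error by writing $\lfloor Q/d\rfloor = Q/d - \{Q/d\}$, which gives
\[
\gamma_t(Q) \; = \; Q \sum_{d \mid t} \frac{\mu(d)}{d} \; - \; \sum_{d \mid t} \mu(d)\,\{Q/d\}.
\]
The main term is immediately identified using the standard multiplicative identity $\sum_{d\mid t}\mu(d)/d = \varphi(t)/t$, yielding the principal term $\varphi(t)Q/t$ as required. I then set
\[
\epsilon_t(Q) \; := \; -\sum_{d \mid t} \mu(d)\,\{Q/d\}.
\]

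The only remaining step is to bound $|\epsilon_t(Q)|$ uniformly in $Q$. Since $|\mu(d)|\leq 1$ and $\{Q/d\}<1$ for every $d$, we have
\[
|\epsilon_t(Q)| \; \leq \; \sum_{d \mid t} |\mu(d)| \; \leq \; \sum_{d \mid t} 1 \; \leq \; \sigma(t),
\]
which gives the required uniform bound. This completes the argument; no step is substantially harder than the others, but the only thing one must be careful about is the interchange of summations in the first step, which is justified because the sum over $d$ is finite for each fixed $q$ and we are merely reindexing a finite double sum.
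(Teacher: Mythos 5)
Your proof is correct and follows essentially the same Möbius-inversion route as the paper: interchange the two finite sums to obtain $\sum_{d\mid t}\mu(d)\lfloor Q/d\rfloor$, split $\lfloor Q/d\rfloor = Q/d - \{Q/d\}$, identify the main term via $\sum_{d\mid t}\mu(d)/d = \varphi(t)/t$, and bound the error by the number of divisors of $t$. In fact you make the final bound on $\epsilon_t(Q)$ explicit, whereas the paper leaves that step implicit after displaying the error term.
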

	\begin{lem}\label{lem:rest2}
	For any fixed real number $z >0$, we have 
	$$
		\sum_{q=1}^Q q^{z-1}\varphi(q) \, = \, \frac{6}{\pi^2(z+1)}\, Q^{z+1} \, + \mathcal{O}(Q^z\log Q)
	$$	
	as $Q \rightarrow \infty$. 
\end{lem}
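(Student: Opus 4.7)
The plan is to prove this standard asymptotic by M\"obius inversion and interchange of summation. The key input is the identity
\[
\varphi(q) \; = \; q \sum_{d \mid q} \frac{\mu(d)}{d},
\]
together with the fact that $\sum_{d=1}^{\infty} \mu(d)/d^2 = 1/\zeta(2) = 6/\pi^2$.

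First I would write
\[
\sum_{q=1}^{Q} q^{z-1}\varphi(q) \; = \; \sum_{q=1}^{Q} q^{z} \sum_{d \mid q} \frac{\mu(d)}{d},
\]
and then swap the order of summation by parameterising $q = dk$ with $d \le Q$ and $1 \le k \le \lfloor Q/d \rfloor$. This turns the double sum into
\[
\sum_{d=1}^{Q} \mu(d)\, d^{z-1} \sum_{k=1}^{\lfloor Q/d \rfloor} k^{z}.
\]

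Next I would invoke the standard Euler--Maclaurin estimate $\sum_{k=1}^{N} k^{z} = N^{z+1}/(z+1) + \mathcal{O}(N^{z})$, valid for $z > 0$, applied with $N = \lfloor Q/d \rfloor$. Splitting off the main term and using $\lfloor Q/d\rfloor^{z+1} = (Q/d)^{z+1} + \mathcal{O}((Q/d)^{z})$ produces
\[
\frac{Q^{z+1}}{z+1} \sum_{d=1}^{Q} \frac{\mu(d)}{d^{2}}
\; + \; \mathcal{O}\Bigl( Q^{z} \sum_{d=1}^{Q} \frac{1}{d}\Bigr).
\]
To finish the main term I would extend the Dirichlet series to infinity, absorbing the tail via the crude bound $\sum_{d>Q} |\mu(d)|/d^{2} \ll 1/Q$, which produces exactly the factor $6/(\pi^{2}(z+1))$ in front of $Q^{z+1}$ and contributes only $\mathcal{O}(Q^{z})$ to the error.

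The main obstacle is bookkeeping the error terms carefully; in particular the naive estimate of the secondary sum gives $\mathcal{O}(Q^{z} \log Q)$, and to obtain the sharper $\mathcal{O}(Q^{z})$ claimed in the statement one must either apply the alternative route through Abel summation against the classical estimate $\sum_{q \le x} \varphi(q) = (3/\pi^{2})x^{2} + \mathcal{O}(x \log x)$ and absorb the logarithm carefully, or invoke a better bound such as $\sum_{q \le x} \varphi(q) = (3/\pi^{2})x^{2} + \mathcal{O}(x)$ in the range $z$ we care about. Everything else is routine once the main-term extraction and tail bound are in place.
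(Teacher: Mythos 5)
Your plan follows exactly the paper's route: M\"obius inversion of $\varphi(q) = q\sum_{d\mid q}\mu(d)/d$, swapping the double sum via $q = dd'$, and estimating $\sum_{d'\le N}(d')^z$ by Euler--Maclaurin. Moreover, you have correctly spotted the subtlety that the paper glosses over. The honest error term from the inner estimate is $O\bigl(Q^z\sum_{d\le Q}|\mu(d)|/d\bigr)=O(Q^z\log Q)$, not $O(Q^z)$: the implicit constant in $\sum_{d'\le N}(d')^z = N^{z+1}/(z+1)+O(N^z)$ depends on $d$ (through $N=\lfloor Q/d\rfloor$), so one cannot legitimately carry the sign $\mu(d)$ inside the big-$O$ and then exploit $\bigl|\sum_{d\le Q}\mu(d)/d\bigr|\le 1$, which is what the paper does when it writes the error as $O\bigl(Q^z\sum_{d\le Q}\mu(d)/d\bigr)$.

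However, neither of your proposed repairs closes the gap. Abel summation against $\sum_{q\le x}\varphi(q)=(3/\pi^2)x^2+O(x\log x)$ simply reproduces the logarithm in the error term. And the ``better bound'' $\sum_{q\le x}\varphi(q)=(3/\pi^2)x^2+O(x)$ that you hope to invoke is in fact \emph{false}: Montgomery proved the error in this classical sum is $\Omega_\pm\bigl(x\sqrt{\log\log x}\bigr)$, so it is not $O(x)$, and consequently the lemma's claimed error $O(Q^z)$ already fails at $z=1$. The correct statement carries an error of $O(Q^z\log Q)$, and that is all the subsequent application requires: when the lemma is used to estimate $\alpha_t$ and $\beta_t$, the ambient error from the divisor-sum bound is already $O(t2^t)$, and the extra logarithmic factor (which becomes $t$ on a dyadic block) is harmless because the argument only needs $\alpha_t\asymp 2^{(n/m+1)t}$, which both error terms leave intact.
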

It suffices to show that	$$
	\sum_{\substack{(a, b)\in \, \N^2 }}  \: \frac{\psi(h_{a, b})}{h_{a, b}^{1-1/p}} \: = \: \infty \quad  \quad\Longrightarrow \quad \quad
	\displaystyle\sum_{(a, b)\in \, \Nc} \, \frac{\psi(h_{a, b})}{h_{a, b}^{1-1/p}}  \: = \: \infty$$
	since the complementary implication is obvious. To do this we will show that if the latter sum converges then so does the former. 
	In order to proceed we first partition the set $\Nc$ by separating the heights $h_{a,b}$ into dyadic blocks. For any $t \in \Z_{\geq 0}$ let 
$$
A_t: = \left\{ (a,b) \in \Nc: \: a^n>b^m,\: 2^{t+1} > a \geq 2^t \right\},
$$
and let
$$
B_t: = \left\{ (a,b) \in \Nc: \: b^m>a^n, \: 2^{t+1} > b \geq 2^t \right\}.
$$
Note that by definition $\Nc$ is precisely the disjoint union $\cup_{t \in \Z_{\geq 0}}(A_t \cup B_t)$. We will denote by $\alpha_t$ the cardinality of the set $A_t$ and by $\beta_t$ the cardinality of the set $B_t$. We may re-express the set $A_t$ in the following way:
$$
A_t = \left\{ (a,b) \in \N^2: \, \gcd(a,b)=1, \, \left\lfloor a^\frac{n}{m}\right\rfloor>b>\left\lfloor 2^{-\frac{1}{m}} \,a^\frac{n}{m}  \right\rfloor,\: 2^{t+1} > a \geq 2^t \right\}.
$$
Recall the following well known expression (e.g. \cite{Apo76} Theorem $3.3$) describing the behaviour of the divisor summatory function:
$$
\sum_{q=1}^Q \: \sigma(q) \, = \, Q \,\log Q \, + \mathcal{O}(Q)
	$$	
	as $Q \rightarrow \infty$. With reference to the notation of Lemma \ref{lem:rest1}, an immediate consequence of the above statement is that for any sequence of natural numbers $\left\{N_q\right\}_{q=1}^{\infty}$ we have
\begin{equation} \label{eqn:error}
\infabs{\sum_{q=2^t+1}^{2^{t+1}} \epsilon_q(N_q)} \, \leq \, \sum_{q=2^t+1}^{2^{t+1}} \infabs{\epsilon_q( N_q)} \, \leq \, \sum_{q=2^t+1}^{2^{t+1}} \sigma(q) \, \ll \, t2^t, 
\end{equation}
for the error function appearing in Lemma \ref{lem:rest1}.
Finally, we recall (e.g. \cite{Apo76} Chapter $3$, Ex. $5(b)$) the property that
$$
 \sum_{d=1}^Q \frac{\varphi(d)}{d} \, = \, \frac{Q}{\zeta(2)} \, + \, \mathcal{O}(\log Q),
	$$
	as $Q \rightarrow \infty$.
Hence, on applying Lemma \ref{lem:rest1} to the set $A_t$ and utilising (\ref{eqn:error}) we conclude that
\begin{eqnarray*}
	\alpha_t & \asymp & \sum_{a=2^t+1}^{2^{t+1}} \left(\gamma_a\left(\left\lfloor 
	a^\frac{n}{m}\right\rfloor-1\right) - \gamma_a\left(\left\lfloor 	2^{-\frac{1}{m}}
	\,a^\frac{n}{m}  \right\rfloor \right)\right)\\
	& = & \sum_{a=2^t+1}^{2^{t+1}}\left(\frac{\varphi(a)}{a} \left(\left\lfloor 
	a^\frac{n}{m}\right\rfloor-1- \left\lfloor 	2^{-\frac{1}{m}}
	\,a^\frac{n}{m}  \right\rfloor \right) + \epsilon_a\left(\left\lfloor 
	a^\frac{n}{m}\right\rfloor-1\right) - \epsilon_a\left(\left\lfloor 	2^{-\frac{1}{m}}
	\,a^\frac{n}{m}  \right\rfloor \right)\right) \\
	& = & \left(1-2^{-\frac{1}{m}}\right) \sum_{a=2^t+1}^{2^{t+1}}a^{\frac{n}{m}-1}\varphi(a) \: + \: \mathcal{O}(t2^t)\	
\end{eqnarray*}  
as $t \rightarrow \infty$. By Lemma \ref{lem:rest2} it follows that
\begin{eqnarray*}
	\alpha_t & = &  \left(1-2^{-\frac{1}{m}}\right)  \frac{2^{(\frac{n}{m}+1)(t+1)}-2^{(\frac{n}{m}+1)t}}{(\frac{n}{m}+1)\zeta(2)} \: + \: \mathcal{O}\left(\max\left(t2^t, \, t2^{\frac{n}{m}t}\right) \right)   \\
	& = & \frac{\left(1-2^{-\frac{1}{m}}\right)\left( 2^{(\frac{n}{m}+1)}-1\right)}{(\frac{n}{m}+1)\zeta(2)}
	\, 2^{(\frac{n}{m}+1)t}  \: + \: \mathcal{O}\left(\max\left(t2^t, \, t2^{\frac{n}{m}t}\right) \right)\	
\end{eqnarray*} 
as $t \rightarrow \infty$, and so $\alpha_t \asymp  2^{(\frac{n}{m}+1)t}$. Analogously, one can show a similar result concerning the set $B_t$; that is, for any $t \in \Z_{\geq 0}$ we have $\beta_t \asymp  2^{(\frac{m}{n}+1)t}$.

To complete the proof,  we deduce that
\begin{eqnarray*}
\displaystyle\sum_{(a, b)\in \, \Nc} \, \frac{\psi(h_{a, b})}{h_{a, b}^{1-1/p}} & \asymp & \displaystyle\sum_{\substack{(a, b)\in \, \Nc \\ h_{a,b} \, = \, a^n}} \, \frac{\psi(a^n)}{a^{n(1-1/p)}} \: \: + \: \:
		\displaystyle\sum_{\substack{(a, b)\in \, \Nc \\ h_{a,b} \, = \, b^m}} \, \frac{\psi(b^m)}{b^{m(1-1/p)}} \\
		& = & \sum_{t=0}^{\infty}  \: \displaystyle\sum_{(a, b)\in \, A_t } \, \frac{\psi(a^n)}{a^{n(1-1/p)}}\: \: + \: \:\sum_{s=0}^{\infty}  \:
		\displaystyle\sum_{(a, b)\in \, B_s } \, \frac{\psi(b^m)}{b^{m(1-1/p)}} \\
		& \gg & \sum_{t=0}^{\infty}  \: \frac{\psi(2^{(t+1)n})}{2^{(t+1)n(1-1/p)}} \: \displaystyle\sum_{(a, b)\in \, A_t } \, 1 \: \: + \: \:\sum_{s=0}^{\infty}  \:
		\frac{\psi(2^{(s+1)m})}{2^{(s+1)m(1-1/p)}} \: \displaystyle\sum_{(a, b)\in \, B_s } \, 1 \\
		& = & \sum_{t=0}^{\infty}  \: \frac{\psi(2^{(t+1)n})}{2^{(t+1)n(1-1/p)}} \: \alpha_t \: \: + \: \:\sum_{s=0}^{\infty}  \:
		\frac{\psi(2^{(s+1)m})}{2^{(s+1)m(1-1/p)}} \: \beta_s \\
		& \asymp & \sum_{t=0}^{\infty}  \: \frac{2^{t+1}\psi(2^{(t+1)n})}{2^{(t+1)n(1-\frac1p-\frac1m)}}  \: \: + \: \:\sum_{s=0}^{\infty}  \: 		\frac{2^{s+1}\, \psi(2^{(s+1)m})}{2^{(s+1)m(1-\frac1p-\frac1n)}} \\
		& \asymp & \sum_{a=1}^{\infty}  \: \frac{\psi(a^{n})}{a^{n(1-\frac1p-\frac1m)}}  \: \: + \: \:\sum_{b=1}^{\infty}  \: 		\frac{\psi(b^{m})}{b^{m(1-\frac1p-\frac1n)}} \\
				& \asymp & \sum_{\substack{(a, b)\in \, \N^2 \\ h_{a, b} \, = \, a^n }}  \: \frac{\psi(h_{a, b})}{h_{a, b}^{1-1/p}}  \: \: + \: \:\sum_{\substack{(a, b)\in \, \N^2 \\ h_{a, b} \,= \, b^m }}  \: 		\frac{\psi(h_{a, b})}{h_{a, b}^{1-1/p}} \\
				& \asymp & \sum_{\substack{(a, b)\in \, \N^2 }}  \: \frac{\psi(h_{a, b})}{h_{a, b}^{1-1/p}}.  \					
\end{eqnarray*}
Thus, if the first sum converges then so does the last and the proposition is proven by a contrapositive argument.

\subsubsection{Some auxiliary lemmata and the general strategy}
\label{sec:lemmas}

For the most part our method for proving the bulk of Theorems 1a and 1b will adhere to the general strategy outlined in~\cite{BDoKL08}. Indeed, the basis of our proof will be the following consequence of Lebesgue's density theorem. 
\begin{lem}
	Let $\Omega$ be an open subset of $\R^k$ and let $E$ be a Borel subset of $\R^k$. If there exist strictly positive constant $r_0$ such that for any ball $B$ in $\Omega$ of radius $r(B)<r_0$ we have 
\begin{equation}\label{eqn:density}
\infabs{E \cap B} \: \gg \: \infabs{B},
\end{equation}
where the implied constant is independent of $B$, then $E$ has full measure in $\Omega$.	
\end{lem}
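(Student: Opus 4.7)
The plan is to prove the lemma by contradiction using the Lebesgue density theorem. Suppose that $E$ does not have full measure in $\Omega$, so that $|\Omega \setminus E| > 0$. Then the Lebesgue density theorem applied to the Borel set $\Omega \setminus E$ guarantees that almost every point of $\Omega \setminus E$ is a density point of this set. In particular, such density points exist, and I fix one, call it $x_0$.

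Next, I would exploit the fact that $\Omega$ is open. Since $x_0 \in \Omega$, there exists $r_1 > 0$ such that the ball $B(x_0, r) \subset \Omega$ for every $r < r_1$. Setting $r^* := \min\{r_0, r_1\}$ and considering any radius $r < r^*$, the hypothesis \eqref{eqn:density} applies to $B(x_0, r)$ and yields a constant $\kappa > 0$, independent of $r$, such that
$$
|E \cap B(x_0, r)| \, \geq \, \kappa \, |B(x_0, r)|.
$$
Dividing by $|B(x_0, r)|$ and taking the limit superior as $r \to 0$, I obtain
$$
\limsup_{r \to 0} \frac{|E \cap B(x_0, r)|}{|B(x_0, r)|} \, \geq \, \kappa \, > \, 0.
$$

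On the other hand, since $x_0$ is a density point of $\Omega \setminus E$ and $\Omega \setminus E$ and $E$ are essentially complementary inside $\Omega$ (and $B(x_0, r) \subset \Omega$ for small enough $r$), the density of $E$ at $x_0$ must equal zero; that is, the same limit superior vanishes. This contradicts the previous inequality, completing the proof. The entire argument is essentially one short deduction, so there is no real obstacle: the only mild subtlety is ensuring that small balls around $x_0$ actually lie inside the open set $\Omega$ before invoking the hypothesis, which is immediate from openness of $\Omega$.
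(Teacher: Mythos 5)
Your argument is correct and complete. The paper does not actually prove this lemma; it simply states it as ``the following consequence of Lebesgue's density theorem'' and moves on. Your contradiction argument is the standard and natural way to derive it: assuming $|\Omega \setminus E|>0$, Lebesgue's density theorem furnishes a density point $x_0$ of the Borel set $\Omega \setminus E$, openness of $\Omega$ ensures the small balls $B(x_0,r)$ lie inside $\Omega$ so that the hypothesis and the density computation both apply, and the two conclusions
\[
\liminf_{r\to 0}\frac{|E\cap B(x_0,r)|}{|B(x_0,r)|}\ \geq\ \kappa\ >\ 0
\qquad\text{and}\qquad
\lim_{r\to 0}\frac{|E\cap B(x_0,r)|}{|B(x_0,r)|}\ =\ 0
\]
are incompatible. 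All measurability issues are in order since $\Omega\setminus E$ is Borel. One cosmetic remark: because the hypothesis holds for \emph{every} small $r$, you actually get the stronger $\liminf\geq\kappa$, which of course still contradicts the density being zero; writing $\limsup$ is harmless but slightly weaker than what you have.
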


Remark: We have previously insisted that all implied constants in the Vinogradov symbols may depend only on $n,m$ and $p$, so the condition above of independence from $B$ may seem obsolete here. However, we include it in our statement as the reader may find other uses for this variant of the Lebesgue density theorem.

For technical reasons, we will take $\Omega$ to be the set $\left[\epsilon, 1\right]^2$ for some arbitrarily small $\epsilon>0$. Then upon setting $E=V_{n,m}^p(\psi)$ and $r_0=\epsilon$, where as before
$$
V_{n,m}^p(\psi)\, = \, \left\{(x, y) \in [0,1)^2: \, (x,y) \in \ell_{a,b} \text{ for infinitely many pairs } (a,b) \in \Nc \right\},
$$
it follows subject to proving (\ref{eqn:density}) that the set $V_{n,m}^p(\psi) \cap \, \left[\epsilon, 1\right]^2$ has measure $(1-\epsilon)^2$. A proof of Theorems 1a and 1b will follow upon letting $\epsilon \rightarrow 0$.

The key to establishing (\ref{eqn:density}) with $E=V_{n,m}^p(\psi)$ and $r_0=\epsilon$ will be the following lemma. 
\begin{lem} \label{lem:quasidef}
Let $E_t$ be a sequence of measurable sets which are \textbf{quasi-independent on average}; that is, the sequence $E_t$ satisfies 
\begin{equation}\label{eqn:diverge}
\sum_{t=1}^\infty \: \infabs{E_t} \, = \infty
\end{equation}
and there exists some strictly positive constant $\alpha$ for which
$$
\sum_{s,t=1}^Q \: \infabs{E_s \, \cap \, E_t} \: \leq \: 
\frac 1\alpha \, \left( \sum_{t=1}^Q \: \infabs{E_t} \right)^2
$$
for infinitely many $Q \in \N$. Then, $$\infabs{\limsup_{t \rightarrow \infty} E_t} \: \geq \: \alpha$$.
\end{lem}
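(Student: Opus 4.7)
The plan is to prove this via the classical Chung--Erd\H{o}s style second-moment argument, followed by a tail/continuity-of-measure step to pass from the union to the limsup.

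First I would set $f_Q := \sum_{t=1}^Q \indicator{E_t}$ and $U_Q := \bigcup_{t=1}^Q E_t$, and observe the two identities
\begin{equation*}
\int f_Q \, d\mu = \sum_{t=1}^Q \infabs{E_t}, \qquad \int f_Q^{\,2} \, d\mu = \sum_{s,t=1}^Q \infabs{E_s \cap E_t},
\end{equation*}
where $\mu$ denotes ambient Lebesgue measure. Since $\supp f_Q \subseteq U_Q$, the Cauchy--Schwarz inequality applied to $f_Q \cdot \indicator{U_Q}$ gives
\begin{equation*}
\left( \sum_{t=1}^Q \infabs{E_t} \right)^{\!2} = \left( \int_{U_Q} f_Q \, d\mu \right)^{\!2} \leq \infabs{U_Q} \cdot \sum_{s,t=1}^Q \infabs{E_s \cap E_t}.
\end{equation*}
Inserting the quasi-independence hypothesis into the denominator yields $\infabs{U_Q} \geq \alpha$ for every $Q \geq 1$.

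Next I would upgrade the statement about $\bigcup_{t=1}^Q E_t$ to one about the tail unions $\bigcup_{t \geq N} E_t$. Fix $N \geq 1$ and apply the same Cauchy--Schwarz estimate to $f_{N,Q} := \sum_{t=N}^Q \indicator{E_t}$ and $U_{N,Q} := \bigcup_{t=N}^Q E_t$, obtaining
\begin{equation*}
\infabs{U_{N,Q}} \;\geq\; \frac{\bigl( \sum_{t=N}^Q \infabs{E_t} \bigr)^{2}}{\sum_{s,t=N}^Q \infabs{E_s \cap E_t}} \;\geq\; \alpha \cdot \frac{\bigl( \sum_{t=N}^Q \infabs{E_t} \bigr)^{2}}{\bigl( \sum_{t=1}^Q \infabs{E_t} \bigr)^{2}},
\end{equation*}
where I have bounded the tail double sum by the full double sum and then applied the hypothesis. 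Because $\sum_{t=1}^\infty \infabs{E_t} = \infty$ by \eqref{eqn:diverge}, the ratio on the right tends to $1$ as $Q \to \infty$, so $\infabs{\bigcup_{t \geq N} E_t} \geq \alpha$ for every fixed $N$.

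Finally, since the ambient space $\Omega$ has finite measure, the sequence of tail unions $\{\bigcup_{t \geq N} E_t\}_{N \geq 1}$ is a nested decreasing family of measurable sets of finite measure, so continuity of measure from above yields
\begin{equation*}
\infabs{\limsup_{t \to \infty} E_t} \;=\; \lim_{N \to \infty} \infabs{\bigcup_{t \geq N} E_t} \;\geq\; \alpha,
\end{equation*}
which is the desired conclusion. The only mildly delicate step is the tail argument in the second paragraph; the main idea there is that while the quasi-independence bound is written in terms of sums starting from $t=1$, the divergence of $\sum \infabs{E_t}$ makes the truncation at $t=N$ asymptotically negligible, so the same constant $\alpha$ survives in the limit.
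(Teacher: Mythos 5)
Your proof is correct and self-contained. The paper, by contrast, disposes of the lemma in one line by citing a general divergence Borel--Cantelli lemma (Sprind\v{z}uk, \emph{Metric theory of Diophantine approximations}, Lemma~5), which states that whenever $\sum_t \infabs{E_t} = \infty$ one has
$\infabs{\limsup_{t} E_t} \geq \limsup_{Q\to\infty} \bigl(\sum_{t=1}^Q \infabs{E_t}\bigr)^2 / \sum_{s,t=1}^Q \infabs{E_s\cap E_t}$;
applying the quasi-independence hypothesis to the right-hand side then yields $\geq \alpha$ immediately. What you have done is prove this cited inequality from scratch: the Cauchy--Schwarz step on $f_Q = \sum_{t=1}^Q \indicator{E_t}$ gives $\infabs{U_Q}\geq\alpha$, the tail-truncation step (bounding the tail double sum by the full double sum and using divergence of $\sum\infabs{E_t}$ to show the correction ratio tends to $1$) pushes the bound to the tail unions $\bigcup_{t\geq N}E_t$, and continuity of measure from above passes to the limsup. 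This is exactly the standard Chung--Erd\H{o}s second-moment argument underlying the Sprind\v{z}uk lemma, so the two proofs are mathematically equivalent; yours simply avoids the external reference. The one implicit hypothesis in your last step is that the tail unions have finite measure so that continuity from above applies; this is automatic here because the sets live in $[0,1)^2$, but it is worth flagging since the lemma is stated without an ambient-space restriction.
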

\begin{proof}
	The lemma follows immediately from a generalisation of the `divergent part' of the Borel-Cantelli lemma (e.g., \cite{Spr_79} Lemma $5$), which states that for any sequence of measurable sets $E_t$ satisfying (\ref{eqn:diverge}) we have
$$	
\infabs{\limsup_{t \rightarrow \infty} E_t} \: \geq \: \limsup_{Q \rightarrow \infty} 
\frac{\left( \sum_{t=1}^Q \: \infabs{E_t} \right)^2}{\sum_{s,t=1}^Q \: \infabs{E_s \, \cap \, E_t}}.
$$
\end{proof}

The remainder of this section will be dedicated to proving the following proposition, which establishes that our strips are \textit{locally} quasi-independent
on average.

\begin{prop}\label{prop:quasi}
For any ball $B \in \left[\epsilon, 1\right]^2$ of radius $r < \epsilon$, the sequence of sets $\left\{\ell_{a,b} \, \cap \, B\right\}_{(a,b) \in \Nc}$ is quasi-independent on average. In particular,
\begin{equation}\label{eqn:condition1}
\sum_{(a,b) \in \Nc}\: \infabs{\ell_{a,b} \, \cap \, B} \, = \infty
\end{equation}
and 
\begin{equation}\label{eqn:condition2}
\sum_{\substack{(a_1, b_1) \neq (a_2, b_2)\in \, \Nc \\  h_1 \, \leq \, H, \: h_2 \, \leq \, H }}  
	\infabs{\ell_{a_1,b_1} \cap \ell_{a_2,b_2} \cap B} \: \ll \: 
\frac 1{\infabs{B}} \left( \sum_{(a, b) \in \, \Nc, \: h_{a,b} \, \leq \, H} \:
\infabs{\ell_{a,b}\cap B} \right)^2,
\end{equation}
for infinitely many $H \in \N$.
\end{prop}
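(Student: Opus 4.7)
The plan is to verify (\ref{eqn:condition1}) and (\ref{eqn:condition2}) through direct geometric estimates on the strips $\ell_{a,b}$ inside $B$. For (\ref{eqn:condition1}), fix any $(a,b) \in \Nc$ and let $(x_0, y_0)$ denote the centre of $B$. Since $B \subset [\epsilon,1]^2$ and the cone condition forces $a^n \asymp b^m \asymp h_{a,b}$, we have $a^n x_0 + b^m y_0 \asymp h_{a,b}$. Any resonant line $a^n x + b^m y = c^p$ meeting $B$ cuts out a chord of length $\asymp r$, so each non-empty strip $\ell_{a,b}(c) \cap B$ has area $\asymp r \cdot \psi(h_{a,b})/h_{a,b}$. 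The number of valid $c \in \Z_{\geq 0}$ equals the number of integer $p$-th powers $c^p$ lying in an interval of length $\asymp r \, h_{a,b}$ centred near $h_{a,b}$; since the map $c \mapsto c^p$ has derivative $\asymp h_{a,b}^{1-1/p}$ in this range, this count is $\asymp r \, h_{a,b}^{1/p}$. Multiplying gives $\infabs{\ell_{a,b}\cap B} \asymp \infabs{B} \cdot \psi(h_{a,b})/h_{a,b}^{1-1/p}$. The divergence claim (\ref{eqn:condition1}) then follows from the standing divergence hypothesis of Theorem \ref{thm:main} combined with Proposition \ref{prop:restrict}.

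For (\ref{eqn:condition2}), the core task is bounding $\infabs{\ell_{a_1,b_1} \cap \ell_{a_2,b_2} \cap B}$ for distinct pairs $(a_1, b_1), (a_2, b_2) \in \Nc$. The coprimality conditions $\gcd(a_i, b_i) = 1$ force $|a_1^n b_2^m - a_2^n b_1^m|$ to be a nonzero integer, hence $\geq 1$, via a short unique-factorisation argument. This integer controls the angle $\theta_{12}$ between the two strips: the cone condition gives $\sin\theta_{12} \asymp |a_1^n b_2^m - a_2^n b_1^m|/(h_1 h_2)$, where $h_i := h_{a_i,b_i}$ and $\psi_i := \psi(h_i)$. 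Whenever $\ell_{a_1,b_1}(c_1) \cap \ell_{a_2,b_2}(c_2)$ is non-empty it lies in a parallelogram of area $\asymp \psi_1\psi_2/|a_1^n b_2^m - a_2^n b_1^m|$. For each of the $\asymp r \, h_1^{1/p}$ choices of $c_1$ with $\ell_{a_1,b_1}(c_1) \cap B \neq \emptyset$, the number of $c_2$ for which $\ell_{a_2,b_2}(c_2)$ meets this fixed strip inside $B$ is $\asymp r \, h_2^{1/p}\sin\theta_{12}$, as seen by tracking the variation of $a_2^n x + b_2^m y$ along the chord $\ell_{a_1,b_1}(c_1) \cap B$.

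Combining these estimates, the $\sin\theta_{12}$ factors cancel to yield $\infabs{\ell_{a_1,b_1} \cap \ell_{a_2,b_2} \cap B} \ll \infabs{B}^{-1} \cdot \infabs{\ell_{a_1,b_1}\cap B} \cdot \infabs{\ell_{a_2,b_2}\cap B}$, upon substituting in the first-paragraph estimate $\infabs{\ell_{a,b} \cap B} \asymp \infabs{B}\, \psi(h_{a,b})/h_{a,b}^{1-1/p}$. Summing over distinct $(a_i,b_i)\in\Nc$ with $h_i\leq H$ then produces precisely the right-hand side of (\ref{eqn:condition2}).

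The main obstacle will be rigorously handling the degenerate regimes underlying this clean picture. When $\sin\theta_{12}$ is very small, the putative intersection parallelogram extends far beyond $B$, so the naive bound $\psi_1\psi_2/|a_1^n b_2^m - a_2^n b_1^m|$ overestimates the true intersection inside $B$ and must be replaced by the smaller of the two strip measures in $B$, with the excess contributions summed carefully so as not to spoil the key cancellation. Analogously, when a strip width $\psi_i/h_i$ is of order $r$ or larger, the chord-counting argument for valid $c_i$ needs adjustment; fortunately this only occurs for finitely many small values of $h_{a,b}$ in view of $\psi \to 0$. Managing these edge-cases while preserving the matched pair of $\asymp$ estimates needed for the final sum is the delicate part of the proof.
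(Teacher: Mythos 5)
Your first paragraph (the estimate $\infabs{\ell_{a,b}\cap B}\asymp\infabs{B}\,\psi(h_{a,b})/h_{a,b}^{1-1/p}$, hence condition \eqref{eqn:condition1}) matches the paper, and your second paragraph reproduces the paper's starting point for the pairwise intersection bound. However, the proposal has a genuine gap that you yourself flag but do not resolve: the entire difficulty of the proposition lives in what you defer to ``the delicate part,'' namely the regime where $\sin\theta_{12}$ is small. When $rh_1^{1/p}\sin\theta_{12}<1$, your count of valid $c_2$ per $c_1$ degenerates to $\ll rh_2^{1/p}\sin\theta_{12}+1$, the ``$+1$'' dominates, the $\sin\theta_{12}$ factors no longer cancel, and the naive product bound $\infabs{\ell_1\cap\ell_2\cap B}\ll\infabs{B}^{-1}\infabs{\ell_1\cap B}\,\infabs{\ell_2\cap B}$ simply fails. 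The paper handles this by splitting into large, medium, and small angle cases, and the latter two require substantive new input: a count of quadruples $(a_1,b_1,a_2,b_2)$ with small angle (medium case), and a divisor-function argument together with the standing reduction $\psi(q)\le q^{-(1/p+1/N)}$ (small case). None of this appears in your sketch.

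More seriously, the determinant lower bound you propose to use is too weak to make the small-angle analysis possible at all. You invoke $\infabs{a_1^nb_2^m-a_2^nb_1^m}\ge 1$, giving only $\sin\theta_{12}\gtrsim (h_1h_2)^{-1}$. Feeding that into the intersection bound $\infabs{\ell_1\cap\ell_2\cap B}\ll r\,\psi(h_1)\psi(h_2)/(h_1 h_2^{1-1/p}\sin\theta_{12})$ yields $\ll r\,\psi(h_1)\psi(h_2)\,h_2^{1/p}$, which has no decay and cannot be summed. The paper's key structural observation is to factor
\[
a_1^nb_2^m-a_2^nb_1^m=\bigl(a_1^{n/k}b_2^{m/k}-a_2^{n/k}b_1^{m/k}\bigr)\sum_{t=1}^{k}a_1^{n(1-t/k)}b_1^{m(t-1)/k}a_2^{n(t-1)/k}b_2^{m(1-t/k)},
\]
with $k=\gcd(n,m)\ge2$, so that the nontrivial integer whose absolute value is $\ge1$ is $a_1^{n/k}b_2^{m/k}-a_2^{n/k}b_1^{m/k}$ while the accompanying sum is $\asymp h_1^{1-1/k}h_2^{1-1/k}$; this produces the much stronger $\sin\theta_{12}\gtrsim h_1^{-1/k}h_2^{-1/k}$, which is exactly what makes the small-angle sum finite. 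This is also where the hypothesis $\gcd(n,m)\ge2$ (or $n=m=1$) enters essentially; your outline makes no use of it, which is a sign that it cannot close the argument. In short: the easy ``large angle'' contribution is handled correctly, but the medium and small angle regimes — the actual content of the proposition — are left open, and the determinant bound you propose would not suffice even as a starting point for them.
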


By construction we have
$$
\limsup_{h_{a,b} \: \rightarrow \: \infty}\left(\ell_{a,b} \, \cap \, B\right) \: = \: V_{n,m}^p(\psi) \, \cap \, B.
$$ 
In view of Lemma \ref{lem:quasidef} and the above discussion, Proposition \ref{prop:quasi} implies that (\ref{eqn:density}) holds with $E=V_{n,m}^p(\psi)$ and $r_0=\epsilon$, and in turn that both Theorems 1a and 1b hold.

\subsection{Estimating the measure of $\boldsymbol{\ell_{a,b} \cap B}$} \label{sec:intersection}
In order to prove Proposition \ref{prop:quasi} we wish to estimate the measure of the intersections $\ell_{a,b} \cap B$ for $(a, b) \in \Nc$. We first verify condition (\ref{eqn:condition1}). To do this, for each $(a, b) \in \Nc$ we first require to estimate the number of integers $c$ for which $\ell_{a,b}(c) \cap B \neq \emptyset$.

Fix the two integers $a$ and $b$ and set $h=h_{a,b}$. If $\ell_{a,b}(c) \cap B \neq \emptyset$ then there exists $(x,y) \in B$ such that $\infabs{a^nx+b^my-c^p}< \psi(h)$. If $h_{a,b}$ is sufficiently large then we may assume $\psi(h) < \epsilon$ and so
$$
c^p \, < \,  a^nx+b^my + \psi(h) \, < a^n+b^m + \epsilon \ < \, 2h+1.
$$
Consequently, we must have that $c < 3h^{1/p}$. On the other hand,
$$
c^p \, > \,  a^nx+b^my - \psi(h) \, > \epsilon(a^n+b^m) - \epsilon \ > \, \epsilon(h-1),
$$
and so 
\begin{equation}\label{eqn:cbounds}
\frac{\epsilon^{1/p}}{2}h^{1/p} \, < \, c \, < \, 3h^{1/p}.
\end{equation}

For ease of notation, for each natural number $c$ let
$$
R_{a,b}(c) : = \left\{(x,y) \in \left[\epsilon, 1\right]^2: \, a^nx+b^my-c^p=0\right\}
$$
denote the intersection of the line $a^nx+b^my-c^p=0$ in $\R^2$ with $\left[\epsilon, 1\right]^2$. Then, $\ell_{a,b}(c) \cap B \neq \emptyset$ if and only if one of the following situations arises:
\begin{enumerate}[\quad 1.]
	\item \, $R_{a,b}(c)$ does not intersect $B$ but passes within a certain small neighbourhood of it. To be precise, the shortest distance from the line $R_{a,b}(c)$ to the centre $(x_0, y_0)$ must not exceed $2\psi(h)/\sqrt{a^{2n}+b^{2m}} + r$.
		\item \, $\,R_{a,b}(c) \cap B \neq \emptyset$. 
	\end{enumerate}
 It is clear that for a fixed pair $(a,b)$ there are at a most two distinct values of $c$ for which the former case is satisfied. To estimate how many the latter contributes we proceed as follows. Denote by $(x_0, y_0)$ the centre of the ball $B$. Then $R_{a,b}(c) \cap B \neq \emptyset$ if and only if there exists $(x,y)\in R_{a,b}(c)$ which can be written in the form
$$
x=x_0 + tr \cos\theta, \, y= y_0 + tr \sin\theta,  \quad \text{ for some } t \in [0,1) \text{ and } \theta \in [0,2\pi).
$$  
This holds if and only if 
$$
a^nx_0 + b^my_0 - r \sqrt{a^{2n}+b^{2m}} \: < \: c^p \: < \: a^nx_0 + b^my_0 + r \sqrt{a^{2n}+b^{2m}}. 
$$
Since we have assumed the radius $r < \epsilon$ and that $x_0, y_0 \geq \epsilon$, the quantity on the left hand side above is strictly positive. Therefore, all possible choices for the integer $c$ for which $R_{a,b}(c) \cap B \neq \emptyset$ lie in an interval of length $n_{a,b, B}$ satisfying
\begin{eqnarray}
n_{a,b, B} & = & r(a^{2n}+b^{2m})^{\frac{1}{2p}} \left(\frac{1}{r}\left(\frac{a^nx_0 + b^my_0}{\sqrt{a^{2n}+b^{2m}}} \: + \: r\right)^{\frac{1}{p}}  \: - \: \frac{1}{r}\left(\frac{a^nx_0 + b^my_0}{\sqrt{a^{2n}+b^{2m}}} \: - \: r\right)^{\frac{1}{p}} \right) \nonumber \\
& = & r(a^{2n}+b^{2m})^{\frac{1}{2p}} \left(\frac{ \left(\frac{a^nx_0 + b^my_0}{\sqrt{a^{2n}+b^{2m}}} \: + \: r\right)^{\frac{2}{p}}  \: - \: \left(\frac{a^nx_0 + b^my_0}{\sqrt{a^{2n}+b^{2m}}} \: - \: r\right)^{\frac{2}{p}}}{ \left(\frac{a^nx_0 + b^my_0}{\sqrt{a^{2n}+b^{2m}}} \: + \: r\right)^{\frac{1}{p}} \: + \: \left(\frac{a^nx_0 + b^my_0}{\sqrt{a^{2n}+b^{2m}}} \: - \: r\right)^{\frac{1}{p}}} \right) \frac{1}{r}\,. \label{bracket}\
\end{eqnarray}
If $p=1$ then $n_{a,b, B}= 2r(a^{2n}+b^{2m})^{\frac{1}{2}}$. Otherwise, by utilising the assumptions that $0 < r< \epsilon$ and $\epsilon \leq x_0, y_0 \leq 1$ and the trivial inequality
$$
\frac{a^n+b^m}{2} \, \leq \, \sqrt{a^{2n}+b^{2m}} \, \leq \, a^n+b^m,
$$
the denominator of the bracketed term in \eqref{bracket} is easily seen to satisfy
\begin{eqnarray*}
	\epsilon^{\frac{1}{p}} \: < \:(\epsilon+r)^{\frac{1}{p}} +(\epsilon-r)^{\frac{1}{p}} & \leq & 
	\left(\frac{a^nx_0 + b^my_0}{\sqrt{a^{2n}+b^{2m}}} \: + \:
	 r\right)^{\frac{1}{p}} \: + \: \left(\frac{a^nx_0 + 	  b^my_0}{\sqrt{a^{2n}+b^{2m}}} 
	 \: - \: r\right)^{\frac{1}{p}} \\
	 &  \leq &  (2+r)^{\frac{1}{p}} +(2-r)^{\frac{1}{p}} \quad  \leq  \: 4.\
\end{eqnarray*}
When $p=2$ the numerator of the bracketed term in \eqref{bracket} is simply equal to $2r$. 
For $p \geq 3$ one can utilise the Mean Value Theorem (or the Generalised Binomial Theorem)  to quickly verify the numerator is $\asymp r$, where the implied constant depends only upon $\epsilon$. 
Combining the two cases ($1.$ and $2.$) outlined earlier in this subsection we conclude that the number of possible choices for $c$ for which $\ell_{a,b}(c) \cap B \neq \emptyset$ must be $\asymp rh^{1/p}$.

We may now estimate the measure of the intersection $\ell_{a,b} \cap B$. For each integer $c$ we have the trivial upper bound 
$$
\infabs{\ell_{a,b}(c) \cap B} \, \leq \, \frac{4r\psi(h)}{\sqrt{a^{2n}+b^{2m}}}\, \ll \frac{r\psi(h)}{h}.
$$
However, we have no general lower bound on the $\ell_{a,b}(c) \cap B$ as the intersection may be even as small as a single point. To counter this problem we consider a subset of those integers $c$ satisfying $\ell_{a,b}(c) \cap B \neq \emptyset$ for which this intersection is sufficiently large. 
Let $\frac{1}{2}B$ be the ball $B$ scaled by one half; that is, $\frac{1}{2}B$ is the open ball in $\Omega$ with centre $(x_0, y_0)$ and radius $r/2$. It is easy to see that for $h$ sufficiently large that if $\ell_{a,b}(c)$ intersects $\frac{1}{2}B$ then $\infabs{\ell_{a,b}(c) \cap B} \, \geq \, 2r\psi(h)/\sqrt{a^{2n}+b^{2m}} \asymp r\psi(h)/h$. As before, the number of possible choices of $c$ for which $\ell_{a,b}(c) \cap \frac{1}{2}B \neq \emptyset$ is $\asymp rh^{1/p}$. 

One should note that the upper bound obtained for the number of choices of $c$ in the calculations above coincides with the trivial one (that is, the diameter of the ball divided by the maximum distance between two adjacent lines as will be calculated in \S\ref{sec:intersections}). However, it is the lower bound which is of importance to us in proving the main theorem.  

Combining the upper and lower bounds for $\infabs{\ell_{a,b}(c) \cap B}$ and the estimates for the number of $c$ for which these intersections are non-empty yield that
$$
r^2\frac{\psi(h)}{h^{1-1/p}}   \, = \, rh^{1/p} \cdot r\frac{\psi(h)}{h} \, \ll \, \infabs{\ell_{a,b} \cap B} \, \ll \, rh^{1/p} \cdot r\frac{\psi(h)}{h} \, = \, r^2\frac{\psi(h)}{h^{1-1/p}}.
$$
In other words, for any $(a, b)\in\N^2$ and any open ball $B \subset \Omega$ we have that
\begin{equation}\label{eqn:intersectbound}
 \infabs{\ell_{a,b} \cap B} \quad \asymp \quad \infabs{B}\frac{\psi(h_{a,b})}{h_{a,b}^{1-1/p}}.
\end{equation}
Thus, condition (\ref{eqn:condition1}) holds for the sequence of sets $\ell_{a,b} \cap B$.

\subsection{Estimating the measure of $\boldsymbol{\ell_{a_1,b_1} \cap \ell_{a_2,b_2} \cap B}$} \label{sec:intersections}
All that remains is to establish condition (\ref{eqn:condition2}). 
Fix two distinct pairs of integers $(a_1,b_1)$ and $(a_2,b_2)$ in $\Nc$ and for ease of notation set $h_1:=h_{a_1, b_1}$ and $h_2:=h_{a_2, b_2}$. Recall that for $(a, b) \in \Nc$ and $c \in \Z_{\geq 0}$ our notation
$$
R_{a,b}(c) : = \left\{(x,y) \in \left[\epsilon, 1\right]^2: \, a^nx+b^my-c^p=0\right\}.
$$
A consequence of the assumption that our natural numbers $a$ and $b$ are coprime is that for any $c_1$ and $c_2$ the line segments $R_{a_1, b_1}(c_1)$ and  $R_{a_2, b_2}(c_2)$ cannot be parallel. As we will see, this ensures that the intersection of the strips $\ell_{a_1,b_1}$ and $\ell_{a_2,b_2}$ is not too large. For the remainder of the section we denote by $\alpha:=\alpha(a_1, b_1, a_2, b_2)$ the strictly positive acute angle between the lines defining $R_{a_1, b_1}(c_1)$ and  $R_{a_2, b_2}(c_2)$; that is, the angle between the vectors $(a_1^n, b_1^m)$ and $(a_2^n, b_2^m)$ in $\R^2$.

Given a ball $B$ in $\left[\epsilon, 1\right]^2$ we wish to deduce bounds on the size of the intersection $\ell_{a_1,b_1} \cap \ell_{a_2,b_2} \cap B$. To do this, we first fix an integer $c_1$ and then  estimate the size of each intersection $\ell_{a_1,b_1}(c_1) \cap \ell_{a_2,b_2} \cap B$. Firstly, observe that the set $\ell_{a_1,b_1}(c_1) \cap B$ can be covered by a strip of length $2r$ and of width $\frac{2\psi(h)}{\sqrt{a_1^{2n}+b_1^{2m}}}$. This strip is a section of the $\frac{\psi(h)}{\sqrt{a_1^{2n}+b_1^{2m}}} \, $-neighbourhood of the line $a_1^nx+b_1^my-c_1^p=0$.
We now consider the size of the intersection of $\ell_{a_2,b_2}$ with such a strip. 

The set $\ell_{a_2,b_2}$ consists of a collection of neighbourhoods of parallel lines of the form
$$
a_2^nx+b_2^my-c_2^p=0, \quad c_2 \geq 0,
$$
in $\left[\epsilon, 1\right]^2$; that is, neighbourhoods of line segments $R_{a_2, b_2}(c_2)$ for $c_2 \geq 0$. A simple geometric argument combined with the binomial theorem shows that the distance between any two such adjacent line segments, say $R_{a_2, b_2}(c_2)$ and $R_{a_2, b_2}(c_2+1)$, is given by
\begin{eqnarray*}
\frac{\left(\frac{(c_2+1)^p-c_2^p}{b_2^m}\right)\left(\frac{(c_2+1)^p-c_2^p}{b_2^m}\right)}{\sqrt{\left(\frac{(c_2+1)^p-c_2^p}{b_2^m}\right)^2+\left(\frac{(c_2+1)^p-c_2^p}{b_2^m}\right)^2}} & = & 
\frac{\left((c_2+1)^p-c_2^p\right)\left((c_2+1)^p-c_2^p\right)}{\sqrt{a_2^{2n}\left((c_2+1)^p-c_2^p\right)^2+b_2^{2m}\left((c_2+1)^p-c_2^p\right)^2}}  \\ 
& \asymp & \frac{(c_2+1)^p-c_2^p}{h_2} \quad \asymp \quad \frac{c_2^{p-1}}{h_2}. \ 
\end{eqnarray*}
By the set of inequalities (\ref{eqn:cbounds}) we know for the intersection $\ell_{a_2,b_2}\cap B$ to be non-empty that $c_2 \asymp h_2^{1/p}$, and so this distance is $\asymp h_2^{\frac{p-1}{p} -1} = h_2^{-1/p}$. Therefore, if two adjacent line segments of the form $R_{a_2, b_2}(c_2)$ and $R_{a_2, b_2}(c_2 \pm 1)$  intersect the line segment $R_{a_1, b_1}(c_1)$ then the distance between the two intersection points is $\asymp (h_2^{1/p}\sin\alpha)^{-1}$. In turn, this implies that there are $\ll r h_2^{1/p}\sin\alpha +1$ non-empty intersections of this type affecting the ball $B$ for each fixed natural number~$c_1$. Furthermore, for each $c_2$ the set $\ell_{a_1,b_1}(c_1) \cap \ell_{a_2,b_2}(c_2)$ takes the form of a parallelepiped with volume $\asymp \frac{\psi(h_1)\psi(h_2)}{h_1h_2\sin\alpha}$. It follows that 
$$
\infabs{\ell_{a_1,b_1}(c_1) \cap \ell_{a_2,b_2} \cap B} \: \ll \:\frac{ (r h_2^{1/p}\sin\alpha +1)\psi(h_1)\psi(h_2)}{h_1 \, h_2\sin\alpha}.
$$

Finally, we recall from \S\ref{sec:intersection} that there are $\asymp r h_1^{1/p}$ possible choices of $c_1$ for which $\ell_{a_1,b_1}(c_1) \cap B \neq \emptyset$, and so
\begin{eqnarray}
	\infabs{\ell_{a_1,b_1} \cap \ell_{a_2,b_2} \cap B} & \ll & \frac{ (r h_2^{1/p}\sin\alpha +1)\psi(h_1)\psi(h_2) \cdot r h_1^{1/p}}{h_1h_2\sin\alpha} \notag  \\
	& \asymp &  \infabs{B}	\frac{\psi(h_1)\psi(h_2)}{h_1^{1-1/p} \,h_2^{1-1/p}} \: \left( 1 + \frac{1}{rh_2^{1/p}\sin\alpha} \right). \label{eqn:sumsize} \
\end{eqnarray}
We now split our calculation into three exhaustive subcases depending on the size of the angle $\alpha$.

\subsubsection{Large angle}

First, we assume that $\alpha$ is large enough to satisfy 
\begin{equation}\label{eqn:large}
\sin\alpha \geq \frac{1}{rh_2^{1/p}}. 
\end{equation}
It immediately follows from (\ref{eqn:intersectbound}) and (\ref{eqn:sumsize}) that
$$
	\infabs{\ell_{a_1,b_1} \cap \ell_{a_2,b_2} \cap B} \: \ll \: \infabs{B}	\frac{\psi(h_1)\psi(h_2)}{h_1^{1-1/p} \,h_2^{1-1/p}},
$$
and so
\begin{eqnarray*}
	\sum_{\substack{(a_1, b_1) \neq (a_2, b_2)\in \, \Nc \\  \text{satisfying (\ref{eqn:large})} \\ h_1 \, \leq \, H, \: h_2 \, \leq \, H}}  
	\infabs{\ell_{a_1,b_1} \cap \ell_{a_2,b_2} \cap B} & \ll & 
\infabs{B} \sum_{\substack{(a_1, b_1) \in \, \Nc  \\   h_1 \, \leq \, H}} \frac{\psi(h_1)}{h_1^{1-1/p} }  
\sum_{\substack{(a_2, b_2) \in \, \Nc \\   h_2 \, \leq \, H}} \frac{\psi(h_2)}{h_2^{1-1/p} } \\
& \asymp & \frac{1}{\infabs{B}} \left( \sum_{\substack{(a, b) \in \, \Nc, \: h_{a,b} \, \leq \, H}}
\infabs{\ell_{a,b}\cap B} \right)^2.\
\end{eqnarray*}
Thus, the set of pairs $(a_1, b_1),(a_2, b_2) \in \Nc$ with property (\ref{eqn:large}) satisfy condition~(\ref{eqn:condition2}).

\subsubsection{Medium angle cases}

For ease of notation, in the remainder of the proof we write $N:=\max(n, m)$, $M:=\min(n, m)$ and $K:=\gcd(n,m)$. 

We next consider the case when the angle between the line segments $R_{a_1, b_1}(c_1)$ and  $R_{a_2, b_2}(c_2)$ is of intermediate size. In this case the intersection $\ell_{a_1,b_1}(c_1) \cap \ell_{a_2,b_2}(c_2) \cap B$ may be quite large and contribute more than its fair share to the volume sum we are calculating. However, we show that the number of pairs $(a_1, b_1)$ and $(a_2, b_2)$ satisfying both this property is sufficiently small. We will require to partition the medium angle cases. To be precise, for any $Q\geq 0$ define the partial sums
$$
	R(Q) = \sum_{q=0}^{Q} \left(\frac{K}{M}\right)^q \quad\quad \text{and} \quad\quad S(Q) = \sum_{q=0}^{Q} \left(\frac{K}{N}\right)^q,
$$
and assume that
\begin{equation}\label{eqn:midangle}
	\frac{1}{r^{R(Q)} \, h_2^{S(Q)/p}} \: \geq \: \sin\alpha \: \geq \: \frac{1}{r^{R(Q+1)} \, h_2^{S(Q+1)/p}}.
\end{equation}
Note that $R(0)=S(0)=1$ and so the upper bound when $Q=0$ coincides with the lower bound of the large angle case. We now prove condition~(\ref{eqn:condition2}) holds for any given $Q\geq 0$.

Fix $Q$. It immediately follows from equations (\ref{eqn:sumsize}) and (\ref{eqn:midangle}) that
\begin{equation}\label{eqn:intersection}
	\infabs{\ell_{a_1,b_1} \cap \ell_{a_2,b_2} \cap B} \: \ll \:  r	\frac{\psi(h_1)\psi(h_2)}{h_1^{1-\frac1p} \,h_2 \, \sin\alpha} \: \leq \: 
	 r^{1+R(Q+1)}	\frac{\psi(h_1)\psi(h_2)}{h_1^{1-\frac1p} \,h_2^{1-S(Q+1)/p}}.
\end{equation}
We wish to find an upper bound for the number of quadruples $(a_1, b_1, a_2, b_2)$ satisfying~(\ref{eqn:midangle}). To begin with, observe that the natural numbers $a_1, b_1, a_2$ and $b_2$ satisfy
$$
	a_1^nb_2^m-a_2^nb_1^m \: = \: 
	\left( a_1^{\frac{n}{K}}b_2^{\frac{m}{K}}-a_2^{\frac{n}{K}}b_1^{\frac{m}{K}} \right) 
	\sum_{t=1}^K a_1^{n(1-\frac tK)}b_1^{\frac{m(t-1)}{K}} a_2^{\frac{n(t-1)}{K}} b_2^{m(1-\frac tK)}.
$$
For $K=1$ this statement is trivial and the sum on the right hand side is trivial, so for now assume that $K\geq2$. In view of the defining properties of the set $\Nc$, for  $i=1,2$ and for any $t=2, \ldots, K$ we have 
$$
\left(\frac{1}{2} \right)^{1-1/K} \: \leq \: \left(\frac{1}{2} \right)^{(t-1)/K} \: \leq \: \frac{a_i^{n(t-1)/K}}{b_i^{m(t-1)/K}} \: \leq \: 2^{1-1/K} \: \leq \: 2^{1-1/K}
$$
and for $t=1, \ldots, K-1$ we have
$$
\left(\frac{1}{2} \right)^{2-1/K} \: \leq \: \left(\frac{1}{2} \right)^{1+t/K} \: \leq \: \frac{a_i^{n(1-t/K)}}{b_i^{m(1-t/K)}} \: \leq \: 2^{1+t/K}\: \leq \: 2^{2-1/K}.
$$
It follows that 
\begin{eqnarray*}
	\sum_{t=1}^K a_1^{n(1-t/K)}b_1^{m(t-1)/K} a_2^{n(t-1)/K} b_2^{m(1-t/K)} & \asymp & 
	\sum_{t=1}^K{h_1^{(1-t/K)+(t-1)/K}}h_2^{(1-t/K)+(t-1)/K} \\ & = & \sum_{t=1}^K{h_1^{1-1/K}}h_2^{1-1/K} \quad \asymp \quad h_1^{1-1/K}h_2^{1-1/K}. \	
\end{eqnarray*} 
Furthermore, since $\gcd(a_1, b_1)=\gcd(a_2, b_2)=1$ it is certain that $a_1^{\frac{n}{K}}b_2^{\frac{m}{K}}-a_2^{\frac{n}{K}}b_1^{\frac{m}{K}}$ is non-zero. In what follows, we denote by $\norm{M}$ the absolute value of the determinant of a matrix $M$.  As $\alpha$ is precisely the positive (acute) angle between the two vectors $(a_1^n, b_1^m)$ and $(a_2^n, b_2^m)$, the cross product formula then readily implies that 
\begin{eqnarray}
1 \quad \leq \quad \begin{Vmatrix}
  a_1^{n/K} & b_1^{m/K} \\
  a_2^{n/K} & b_2^{m/K} 
 \end{Vmatrix}
 & \asymp  & h_1^{1/K-1}h_2^{1/K-1} \begin{Vmatrix}
  a_1^{n} & b_1^{m} \\
  a_2^{n} & b_2^{m} 
 \end{Vmatrix} \notag \\
 & = & h_1^{1/K-1}h_2^{1/K-1} \sin\alpha \sqrt{a_1^{2n}+b_1^{2m}}\sqrt{a_2^{2n}+b_2^{2m}} \notag \\ 
 & \asymp & h_1^{1/K}h_2^{1/K}\sin\alpha. \label{eqn:determinant} \
\end{eqnarray} 
\begin{rem}
	One can easily verify that this system of inequalities also holds in the case $K=1$ and is sufficient for the proof in the classical $n=m=1$ case. However, for a generic coprime $m$ and $n$ we do not gain any new information from (\ref{eqn:determinant}), and this provides an obstacle in extending our results to the $K=1$ case in general.
\end{rem}

In particular, for any $K$, assume that we have two pairs $(a_1, b_1)$ and $(a_2, b_2)$ both in $\Nc$ and both satisfying~(\ref{eqn:midangle}). It follows that 
\begin{equation} \label{eqn:count}
	\infabs{b_2^{\frac{m}{K}}-a_1^{-\frac{n}{K}}a_2^{\frac{n}{K}}b_1^{\frac{m}{K}}}  \: 
	\ll \: r^{-R(Q)}a_1^{-\frac{n}{K}}h_1^{\frac1K}\: h_2^{\frac1K-S(Q)/p} \: \ll \: r^{-R(Q)}h_2^{\frac1K-S(Q)/p}.
\end{equation}

Now, assume that the pair $(a_1, b_1)$, and therefore $h_1$, is fixed. Then the above calculation yields that for each fixed $a_2$ there are at most a constant times $r^{-KR(Q)/m}h_2^{1/m-KS(Q)/(mp)}$ possible choices for $b_2$. Similarly, if one were to fix $b_2$ then we would have at most a constant times $r^{-K R(Q)/n}h_2^{1/n-K S(Q)/(np)}$ ways of choosing $a_2$. Note that unless $S(Q) \leq \frac{p }{K}$ there are no such choices for sufficiently large $h_2$ and so without loss of generality we may assume this inequality holds for each $Q$.

We now calculate the total volume that the intersections $\ell_{a_1,b_1} \, \cap \, \ell_{a_2,b_2} \, \cap \, B$ contribute to our measure sum in the case that (\ref{eqn:midangle}) holds. For conciseness of notation define the function $f:  \N^2\times\left\{n,m\right\} \rightarrow \N$ in the following way:
$$
f(a,b,i)= \begin{cases}
			a^{n}, & i = n. \\
 		& \\
  		b^{m}, & i = m.
  	\end{cases}
$$
Without loss of generality we assume that $h_2 \geq h_1$. 
Following on from (\ref{eqn:intersection}) we have that
\begin{eqnarray*}
	\sum_{\substack{(a_1, b_1) \neq (a_2, b_2)\in \, \Nc \\  \text{satisfying (\ref{eqn:midangle})} \\ h_1 \, \leq \,h_2 \,\leq  \, H}} 
	\infabs{\ell_{a_1,b_1} \cap \ell_{a_2,b_2} \cap B} & \ll & 
	r^{1+R(Q+1)} \sum_{\substack{(a_1, b_1) \neq (a_2, b_2)\in \, \Nc \\  \text{satisfying (\ref{eqn:midangle})} \\ h_1 \, \leq \,h_2 \,\leq  \, H}}  
	\: 		\frac{\psi(h_1)\psi(h_2)}{h_1^{1-\frac1p} \,h_2^{1-\frac {S(Q+1)}{p}}} \\
	& \leq &  r^{1+R(Q+1)} \left( S_{n,n} \: + \: S_{n,m} \: + \: S_{m,n} \: + \: S_{m,m} \right), \
\end{eqnarray*}
where 
$$
	S_{i,j}: \: = \: \sum_{g_2=1}^{\left\lfloor H^{1/j} \right\rfloor} 
	\sum_{g_1=1}^{\left\lfloor g_2^{j/i}\right\rfloor} \: 
	\sum_{\substack{(a_1, b_1) \neq (a_2, b_2)\in \, \Nc \\  \text{satisfying (\ref{eqn:midangle})} \\ h_1=f(a_1, b_1, i)=g_1^i, \\ h_2=f(a_2, b_2, j)=g_2^j 
}}  
	\: 	\frac{\psi(h_1)\psi(h_2)}{h_1^{1-\frac1p} \,h_2^{1-\frac {S(Q+1)}{p}}}.
$$
 The somewhat cumbersome collection of sum conditions in the final expression serve a very important purpose. We have split our volume calculation into four smaller sums $S_{i,j}$ (for $i,j \in \left\{n,m\right\}$). Each sum corresponds to the pairs of natural numbers $(a_1, b_1)$ and $(a_2, b_2)$ for which $h_1$ takes the form of an $i$'th power and $h_2$ takes the form of a $j$'th power. However, it is for example perfectly possible for the height $h_1$ to take the form of an $n$-th power yet to be attained on the second component; i.e., we could have for some natural number $b$ that $a_1^{n/m} < b_1=b^n$ and so $h_1=b_1^m=(b^n)^m=(b^m)^n$. If $(a_1, b_1)$ does take this form then we do not wish to count in the sums $S_{n,n}$ or $S_{n,m}$ the quadruple $(a_1, b_1, a_2, b_2)$ as they will have already appeared in the sum $S_{m,n}$ (if $h_2=a_2^n$) or $S_{m,m}$ (if $h_2=b_2^m$). The function $f$ guarantees that the values taken by $h_1$ and $h_2$ genuinely appear in the first component if they appear as $g_1^n$ or $g_2^n$ and the second component if they appear as $g_1^m$ or~$g_2^m$. This painstaking stipulation will prove important in our calculation.

 For the purpose of clarity we will exhibit the calculations relating to the two sums $S_{n,n}$ and $S_{m,n}$ separately; upper bounds for the remaining sums $S_{m,m}$ and $S_{n,m}$ follow analogously.

Firstly, the case $i=j=n$ corresponds to those pairs $(a_1, b_1)$ and $(a_2, b_2)$  for which $h_1=a_1^n$ and $h_2=a_2^n$. We use the crude upper bound of $h_1^{1/m}$ to estimate the number of possible pairs $(a_1, b_1)$ satisfying $h_1=a_1^n$; it turns out that taking into account conditions (\ref{eqn:restrictions}) to reduce this trivial bound serves little purpose here. However, once $a_2$ is also chosen we may use our estimate for the number of ways of choosing the integer $b_2$ so that (\ref{eqn:midangle}) holds once the other three natural numbers $a_1, a_2, b_1$ are already prescribed.  For fixed natural numbers $g_1$ and $g_2$ with $g_1 \leq g_2$ we have
$$
	\sum_{\substack{(a_1, b_1) \neq (a_2, b_2)\in \, \Nc \\  \text{satisfying (\ref{eqn:midangle})} \\  h_1=a_1^n=g_1^n, \\ h_2=a_2^n=g_2^n }} \: 1 \quad \ll \quad g_1^{\frac nm} \cdot r^{-\frac {KR(Q)}{m}}g_2^{\frac nm (1-KS(Q)/p)}.
$$
Also, observe that $S(Q) =  \frac NK (S(Q+1)-1)$ and so
$$
S(Q+1) - \frac{KS(Q)}{m} \quad = \quad S(Q+1) - \frac{N}{m}(S(Q+1) - 1) \quad \leq \quad 1.
$$
Therefore,
\begin{eqnarray*}
	\sum_{\substack{(a_1, b_1) \neq (a_2, b_2)\in \, \Nc \\  \text{satisfying (\ref{eqn:midangle})} \\\\ h_1=a_1^n=g_1^n, \\ h_2=a_2^n=g_2^n}}  
	\: 	\frac{\psi(h_1)\psi(h_2)}{h_1^{1-\frac1p} \,h_2^{1-S(Q+1)/p}}
	& = & \frac{\psi(g_1^n)\psi(g_2^n)}{g_1^{n(1-\frac1p)} \,g_2^{n(1-\frac {S(Q+1)}{p})}} \:
		\sum_{\substack{(a_1, b_1) \neq (a_2, b_2)\in \, \Nc \\  \text{satisfying (\ref{eqn:midangle})} \\  h_1=a_1^n=g_1^n, \\ h_2=a_2^n=g_2^n }} \: 1 \\
		& \ll & r^{-\frac {KR(Q)}{m}} \frac{\psi(g_1^n)\psi(g_2^n)}{g_1^{n(1-\frac 1p-\frac1m)}
		g_2^{n(1-\frac{S(Q+1)}{p}-\frac1m + \frac{KS(Q)}{mp})}} \\
		& \leq & 		r^{-\frac {KR(Q)}{m}} \frac{\psi(g_1^n)}{g_1^{n(1-\frac 1p-\frac1m)}}\frac{\psi(g_2^n)}{g_2^{n(1-\frac 1p-\frac1m)}}, \
\end{eqnarray*}

and so $S_{n,n}$ is bounded above by
$$
 r^{-\frac {KR(Q)}{m}}\sum_{g_2=1}^{\left\lfloor H^{1/n} \right\rfloor} 
	\sum_{g_1=1}^{g_2} \frac{\psi(g_1^n)}{g_1^{n(1-\frac 1p-\frac1m)}}\frac{\psi(g_2^n)}{g_2^{n(1-\frac 1p-\frac1m)}} \quad \leq \quad r^{-\frac {KR(Q)}{M}}\left( \sum_{a=1}^{\left\lfloor H^{1/n} \right\rfloor} 
	\frac{\psi(a^n)}{a^{n(1-\frac 1p-\frac1m)}} \right)^2.
$$

Next, in the case $i=n, \, j=m$ the sum $S_{n,m}$ corresponds to those pairs $(a_1, b_1)$ and $(a_2, b_2)$ for which $h_1=b_1^n$ and $h_2=a_2^m$. Here we use our estimate for the number of ways of choosing the integer $a_2$ so that (\ref{eqn:midangle}) is satisfied once the other three natural numbers $a_1, b_1, b_2$ are already chosen.  For any given natural numbers $g_1$ and $g_2$ with $g_1^n \leq g_2^m$ we have
$$
	\sum_{\substack{(a_1, b_1) \neq (a_2, b_2)\in \, \Nc \\  \text{satisfying (\ref{eqn:midangle})} \\  h_1=b_1^n=g_1^n, \\ h_2=a_2^m=g_2^m }} \: 1 \quad \leq \quad g_1^{\frac nm} \cdot r^{-\frac {KR(Q)}{n}}g_2^{\frac mn (1-KS(Q)/p)}.
$$
As before it follows that
\begin{eqnarray*}
	\sum_{\substack{(a_1, b_1) \neq (a_2, b_2)\in \, \Nc \\  \text{satisfying (\ref{eqn:midangle})} \\\\ h_1=b_1^n=g_1^n, \\ h_2=a_2^m=g_2^m}}  
	\: 	\frac{\psi(h_1)\psi(h_2)}{h_1^{1-\frac1p} \,h_2^{1-S(Q+1)/p}}
	& = & \frac{\psi(g_1^n)\psi(g_2^m)}{g_1^{n(1-\frac1p)} \,h_2^{m(1-\frac {S(Q+1)}{p})}} \:
		\sum_{\substack{(a_1, b_1) \neq (a_2, b_2)\in \, \Nc \\  \text{satisfying (\ref{eqn:midangle})} \\  h_1=b_1^n=g_1^n, \\ h_2=a_2^m=g_2^m }} \: 1 \\
		& \ll & r^{-\frac {KR(Q)}{n}} \frac{\psi(g_1^n)\psi(g_2^m)}{g_1^{n(1-\frac1p-\frac1m)}
		g_2^{m(1-\frac{S(Q+1)}{p}-\frac1n + \frac{KS(Q)}{np})}} \\
		& \leq & 		r^{-\frac {KR(Q)}{n}} \frac{\psi(g_1^n)}{g_1^{n(1-\frac1p-\frac1m)}}\frac{\psi(g_2^m)}{g_2^{m(1-\frac1p-\frac1n)}}, \
\end{eqnarray*}
and so
$$
S_{n,m} \quad \leq \quad r^{-\frac {KR(Q)}{M}}\left( \sum_{a=1}^{\left\lfloor H^{1/n} \right\rfloor} 
	\frac{\psi(a^n)}{a^{n(1-\frac1p-\frac1m)}} \right)\left( \sum_{b=1}^{\left\lfloor H^{1/m} \right\rfloor} 
	\frac{\psi(b^m)}{b^{m(1-\frac1p-\frac1n)}} \right).
$$

One can carry out analogous calculations corresponding to the remaining two cases, leading to the estimates 
$$
S_{m,n} \quad \leq \quad r^{-\frac {KR(Q)}{M}}\left( \sum_{b=1}^{\left\lfloor H^{1/m} \right\rfloor} 
	\frac{\psi(b^m)}{b^{m(1-\frac1p-\frac1n)}} \right)\left( \sum_{a=1}^{\left\lfloor H^{1/n} \right\rfloor} 
	\frac{\psi(a^n)}{a^{n(1-\frac1p-\frac1m)}} \right)
$$
and
$$
S_{m,m} \quad \leq \quad r^{-\frac {KR(Q)}{M}}\left( \sum_{b=1}^{\left\lfloor H^{1/m} \right\rfloor} 
	\frac{\psi(b^m)}{b^{m(1-\frac1p-\frac1n)}} \right)^2.
$$
Thus, the sum of sums $S_{n,n} + S_{n,m} + S_{m,n} + S_{m,m}$ is bounded above by 
$$
r^{-\frac {KR(Q)}{M}}\left( \sum_{a=1}^{\left\lfloor H^{1/n} \right\rfloor} 
	\frac{\psi(a^n)}{a^{n(1-\frac1p-\frac1m)}} \: + \: \sum_{b=1}^{\left\lfloor H^{1/m} \right\rfloor} 
	\frac{\psi(b^m)}{b^{m(1-\frac1p-\frac1n)}} \right)^2
	\: \asymp \: r^{-\frac {KR(Q)}{M}} \left( \sum_{\substack{(a, b) \in \, \N^2 \\ h_{a,b} \, \leq \, H}} \frac{\psi(h_{a,b})}{h_{a,b}^{1-\frac 1p} } \right)^2.
$$
It follows from \S\ref{sec:restrictions} that this quantity is in turn bounded above by a constant times 
$$
\frac{1}{r^{4+\frac {KR(Q)}{M}}} \left( \sum_{\substack{(a, b) \in \, \Nc \\ h_{a,b} \, \leq \, H}} \infabs{B} \: \frac{\psi(h_{a,b})}{h_{a,b}^{1-1/p} } \right)^2 \quad \asymp \: \frac{1}{r^{4+\frac {KR(Q)}{M}}} \left( \sum_{\substack{(a, b) \in \, \Nc \\ h_{a,b} \, \leq  \, H }}
	\infabs{\ell_{a,b} \: \cap \: B} \right)^2.
$$
It is readily verified that $ 1+R(Q+1)- (4 + \frac {KR(Q)}{M}) = -2$
and so the total contribution to our volume sum made by pairs $(a_1, b_1)$ and $(a_2, b_2)$ for which (\ref{eqn:midangle}) holds must satisfy
$$
	\sum_{\substack{(a_1, b_1) \neq (a_2, b_2)\in \, \Nc \\  \text{satisfying (\ref{eqn:midangle})} \\ h_1 \, \leq \,h_2 \,\leq  \, H}} 
	\infabs{\ell_{a_1,b_1} \cap \ell_{a_2,b_2} \cap B} \quad \ll \quad
 \frac{1}{\infabs{B}}  \left( \sum_{\substack{(a, b) \in \, \Nc \\  h_{a,b} \, \leq  \, H}}
	\infabs{\ell_{a,b} \cap B} \right)^2.  
$$
Condition~(\ref{eqn:condition2}) therefore holds in the medium angle cases for any given $Q \geq 0$.

Notice that if $K = N$ (i.e., if $n=m$) then $S(Q)$ diverges as $Q \rightarrow \infty$ and so we may consider as small an angle as we wish via the medium angle method. Moreover, as soon as $S(Q) > \frac pK$ then there are no quadruples of integers satisfying (\ref{eqn:count}) for sufficiently large $h_2$ and so the volume contribution is zero. Thus, the proof of Theorem 1a is complete and we proceed upon the assumptions of Theorem 1b; in particular that $n\neq m$ (and so $K<N$).

\subsubsection{Small angle} Finally, we consider the scenario in which the angle $\alpha$ between the vectors $(a_1^n, b_1^m)$ and $(a_2^n, b_2^m)$ is very small indeed. Fix some sufficiently small $\epsilon>0$, which may depend upon only $n$, $m$ and $p$. By taking $Q_0$ sufficiently large such that $S(Q_0+1) \geq N (N-K)^{-1} - \epsilon p$, the cases of angles satisfying  (\ref{eqn:midangle}) for $Q \leq Q_0$ follow from the medium angle method; we may choose such an $\epsilon$ since $S(Q)$ converges to~$N (N-K)^{-1}$ when $K < N$.  We are left only with those cases in which the angle $\alpha$ satisfies

\begin{equation}\label{eqn:vsa1}
\sin\alpha \: < \: \frac{1}{r^{R(Q_0+1)} \, h_2^{\frac N {p(N-K)} - \epsilon}}.
\end{equation}
Whilst the intersection inside the ball $B$ of the corresponding strips $\ell_{a_1, b_1}$ and $\ell_{a_2, b_2}$ may now be very large due to the strips being close to parallel, we show that since $(a,b)$ are chosen only from $\Nc$ then there are very few pairs indeed satisfying this condition. 

Recall that inequality (\ref{eqn:determinant}) tells us that any two distinct pairs of natural numbers $(a_1, b_1)$ and $(a_2, b_2)$ in $\Nc$ must also satisfy $\sin\alpha \gg h_1^{-1/K}h_2^{-1/K}$. It follows from~(\ref{eqn:sumsize}) that
\begin{equation}\label{eqn:intersection1}
	\infabs{\ell_{a_1,b_1} \cap \ell_{a_2,b_2} \cap B} \: \ll \:  r	\frac{\psi(h_1)\psi(h_2)}{h_1^{1-1/p} \,h_2 \, \sin\alpha} \: \leq \: 
	r\frac{\psi(h_1)\psi(h_2)}{h_1^{1-1/p-1/K} \,h_2^{1-1/K}}.
\end{equation}

Proceeding as before, and again assuming without loss of generality that $h_2 \geq h_1$, it is quickly verified that an upper bound for the volume sum required is given by sums (for $i,j \in \left\{n,m\right\}$) of the form
$$
T_{i,j} \: = \: \sum_{t_2=1}^{\left\lfloor \log(H^{1/j})\right\rfloor} \sum_{t_1=1}^{\left\lfloor \frac{j}{i}t_2 \right\rfloor } \: \frac{\psi(2^{it_1})\psi(2^{jt_2})}{2^{it_1(1-\frac1p-\frac1K)} \,2^{jt_2(1-\frac1K)}} \:\sum_{\substack{2^{t_1} \, < \, g_1 \, \leq  \, 2^{t_1+1} \\2^{t_2} \, < \, g_2 \, \leq  \, 2^{t_2+1}
}}   \: \:  \sum_{\substack{(a_1, b_1) \neq (a_2, b_2)\in \, \N^2 \\  \text{satisfying (\ref{eqn:vsa1})}  \\ h_1=g_1^i, \, h_2=g_2^j  }}
\: 1.
$$
However, by combining inequality (\ref{eqn:determinant}) with (\ref{eqn:vsa1}) we deduce that  
 \begin{equation}\label{eqn:vsa2}
 1 \: \leq \: \begin{Vmatrix}
a_1^{\frac{n}{K}}&b_1^{\frac{m}{K}}\\a_2^{\frac{n}{K}}&b_2^{\frac{m}{K}}
\end{Vmatrix} \: \ll \:  h_1^{\frac1K}h_2^{\frac1K}\sin\alpha \: <\: \frac{h_1^{\frac{1}{K}}h_2^{\frac{1}{K}-\frac N {p(N-K)}+\epsilon}}{r^{R(Q_0+1)}}.
 \end{equation}
 We may therefore deduce that $p\geq\frac {KN} {N-K}-\epsilon'$ (where $ \epsilon' = \epsilon Kp$), else no such quadruples exist for sufficiently large $h_2$. Hence, by taking $\epsilon$ (ergo $\epsilon'$) sufficiently small and noting that $KN>(N-K)$ when $N>K$, the proof is complete for the `$p=1$' cases. 
Additionally, when $K \geq 3$ we must have $P < K < \frac {KN} {N-K}$ by  (\ref{eqn:pbound}), and so the main result holds in that case. Finally, when  $\gcd(n,m)=2$ one can check by hand that all remaining cases are either covered by the above condition or violate (\ref{eqn:pbound}). This completes the proof of Theorem~1b.

\section{Proof of Theorem 2} \label{HM}

For completeness we give a very brief introduction to Hausdorff measures and dimension. For further details see \cite{BeDo_99}.
Let
$F\subset \R^n$.
 Then, for any $\rho>0$ a countable collection $\{B_i\}$ of balls in
$\R^n$ with diameters $\mathrm{diam} (B_i)\le \rho$ such that
$F\subset \bigcup_i B_i$ is called a $\rho$-cover of $F$. For  a dimension function $f$  define
\[
\Ha_\rho^f(F)=\inf \sum_i f(\mathrm{diam}B_i),
\]
where the infimum is taken over all possible $\rho$-covers $\{B_i\}$ of $F$. It is easy to see that $\Ha_\rho^f(F)$ increases as $\rho$ decreases and so approaches a limit as $\rho \rightarrow 0$. This limit could be zero or infinity, or take a finite positive value. Accordingly, the \textit{Hausdorff $f$-measure $\Ha^f$} of $F$ is defined to be
\[
\Ha^f(F)=\lim_{\rho\to 0}\Ha_\rho^f(F).
\]
It is easily verified that Hausdorff measure is monotonic and countably sub-additive, and that $\Ha^s(\emptyset)=0$. Thus it is an outer measure on $\R^n$. In the case that $f(r)=r^s (s>0), $ the measure $\Ha^f$ is the usual $s$-dimensional Hausdorff measure $\Ha^s$. For any subset $F$ one can easily verify that there exists a unique critical value of $s$ at which $\Ha^s(F)$ `jumps' from infinity to zero. The value taken by $s$ at this discontinuity is referred to as the \textit{Hausdorff dimension of  $F$}  and is denoted by $\dim F $; i.e.,
\[
\dim F :=\inf\{s\in \R^+\;:\; \Ha^s(F)=0\}.
\]

When $s$ is an integer, $n$ say, then $\Ha^n$ coincides up to universal constants with standard $n$-dimensional Lebesgue measure. In particular, a set $E \subseteq \R^n$ is null/full with respect to $\Ha^n$ if and only if it is null/full with respect to usual Lebesgue measure. 
Hausorff $s$-measure, like Lebesgue measure, is preserved (up to a constant) by certain well behaved maps.  In particular, if $g: E \to F$ is a bi-Lipshitz bijection between two sets in Euclidean space then $\Ha^s(E) \asymp \Ha^s(F)$.

The proof of Theorem 2 is analogous to the proof of Theorem 3.2 in \cite{BDoKL08} and we will not give it in full detail. However, some subtleties occur, and we give a short outline of the proof with details where these are non-trivial.

For the convergence case the assumption that $\psi$ is monotonic becomes irrelevant. The proof follows by simply using a standard covering argument, which depends on modifying the argument in \S\ref{sec:conv} appropriately for Hausdorff measure. 

In view of the divergence part of Theorems 1a and 1b, proving the divergence part of Theorem 2 is now surprisingly easy due to the remarkable mass transference principle  for linear forms developed by Beresnevich \& Velani~\cite{BV06, BV06_IMRN}. This principle relies upon a `slicing' technique and allows us to transfer statements about the Lebesgue measure of general limsup sets to statements concerning Hausdorff measures. We outline a specialised setup here, tailored to our needs. For consistency, we appeal to the following original notation.

Assume we are given a family $\mc R=(R_\alpha)_{\alpha\in J}$ of lines $R_\alpha \subset \R^2$ indexed by an infinite countable set $J$.   For every $\alpha\in J$ and real $\delta\geq0$ define the $\delta$-neighbourhood of $R_\alpha$ by

\[\Delta(R_\alpha, \delta):=\left\{\x\in \R^2: \: \inf_{\y \in R_\alpha} |\x- \y|<\delta\right\}.\]
Next, assume we are given a non-negative, real-valued function $\Upsilon$ on $J$: \[\Upsilon:J\to\R^+:\alpha\to\Upsilon(\alpha):=\Upsilon_\alpha.\]
Furthermore, to ensure that $\Upsilon_\alpha\to 0$ as $\alpha$ runs through $J$ it is assumed that for every $\epsilon>0$ the set $\{\alpha\in J: \Upsilon_\alpha>\epsilon\}$ is finite. Finally, denote by
\[\Lambda(\Upsilon)=\left\{\x\in\R^2: \x\in \Delta(R_\alpha, \Upsilon_\alpha) \ {\rm for \  infinitely \ many }\: \alpha\in J\right\}\]
the set of points lying in the respective $\Upsilon_\alpha$-neighbourhood of infinitely many of the lines~$R_\alpha$.

\begin{thm}[Beresnevich \& Velani \cite{BV06_IMRN}]\label{mtp}
Let $\mc R$ and $\Upsilon$ be given as above. Let $V$ be a line in $\R^2$ such that
\begin{enumerate}
\item $V\cap R_\alpha\neq\emptyset\quad {\rm for \ all \ } \ \alpha\in J$, and
\item $\sup_{\alpha\in J} \: {\rm diam} (V\cap \Delta(R_\alpha, 1))<\infty.$
\end{enumerate}
Let $f$ and $g:r\to g(r):=r^{-1}f(r)$ be dimension functions such that $r^{-2}f(r)$ is monotonic and let $\Omega$ be a ball in $\R^2$. Suppose for any ball $B$ in $\Omega$ we have
\[\Ha^2\left(B\cap \Lambda\left(g(\Upsilon\right)\right)=\Ha^2(B).\] Then \[\Ha^f\left(B\cap \Lambda\left(\Upsilon\right)\right)=\Ha^f(B).\] 
\end{thm}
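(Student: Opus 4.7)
The plan is to reduce the two-dimensional Hausdorff statement to the one-dimensional Mass Transference Principle of Beresnevich--Velani by a slicing (Fubini-type) argument in the direction of the transversal line $V$. The upper bound $\Ha^f(B\cap\Lambda(\Upsilon))\le\Ha^f(B)$ is automatic by monotonicity, so it suffices to establish the lower bound $\Ha^f(B\cap\Lambda(\Upsilon))\ge\Ha^f(B)$ for every ball $B\subset\Omega$.

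Fix a unit vector $\mathbf{e}$ orthogonal to the direction of $V$ and, for $t$ ranging in a small real interval, let $V_t:=V+t\mathbf{e}$; these translates foliate a neighbourhood of $\Omega$. Conditions (1) and (2), together with a short continuity/compactness argument, guarantee that the transversality of $V_t$ to each $R_\alpha$ remains uniformly controlled for small $t$: for each such $t$ and every relevant $\alpha$, the slice $V_t\cap\Delta(R_\alpha,\delta)$ is an interval of length comparable to $\delta$, with implied constants independent of $\alpha$ and $t$. The full-measure hypothesis $\Ha^2(B'\cap\Lambda(g(\Upsilon)))=\Ha^2(B')$ for every ball $B'\subset\Omega$, combined with Fubini's theorem, then forces the one-dimensional Lebesgue measure of $V_t\cap B\cap\Lambda(g(\Upsilon))$ to be full in $V_t\cap B$ for Lebesgue-almost every $t$.

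On each such good slice, observe that $V_t\cap\Lambda(g(\Upsilon))$ is a $\limsup$ of intervals of length $\asymp g(\Upsilon_\alpha)$ while $V_t\cap\Lambda(\Upsilon)$ is the corresponding $\limsup$ of intervals of length $\asymp\Upsilon_\alpha$. Since $g(r)=r^{-1}f(r)$ and $r^{-2}f(r)$ is monotonic, $g$ itself is a one-dimensional dimension function with $r^{-1}g(r)$ monotonic, so one can apply the one-dimensional Mass Transference Principle of Beresnevich--Velani to these interval systems on $V_t$: full Lebesgue measure of the $\limsup$ of the $g(\Upsilon_\alpha)$-intervals upgrades to full $\Ha^g$-measure of the $\limsup$ of the $\Upsilon_\alpha$-intervals. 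Thus $\Ha^g(V_t\cap B\cap\Lambda(\Upsilon))=\Ha^g(V_t\cap B)$ for almost every $t$.

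Finally I would transfer these slice-wise statements back to $\Ha^f$ in the plane by invoking a Marstrand-type slicing inequality of the form
\[
\Ha^f(E)\;\gg\;\int \Ha^g\bigl(V_t\cap E\bigr)\,dt,
\]
valid precisely under the assumption that $r^{-2}f(r)$ is monotonic and $g(r)=r^{-1}f(r)$ (this is the classical Falconer/Marstrand slicing principle extended to general dimension functions; it is the analytic content of the ``slicing'' half of the linear-forms MTP from \cite{BV06_IMRN}). Applying this both to $E=B\cap\Lambda(\Upsilon)$ and to $E=B$, one obtains $\Ha^f(B\cap\Lambda(\Upsilon))\gg\Ha^f(B)$ with an absolute implied constant. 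The promotion from ``positive proportion'' to equality is then a standard local density argument: because the whole argument applies verbatim to every sub-ball $B'\subset B\subset\Omega$, the set $B\cap\Lambda(\Upsilon)$ has uniform lower density at every point of $B$ in the $\Ha^f$-sense, and an application of the density theorem for $\Ha^f$ forces $\Ha^f(B\cap\Lambda(\Upsilon))=\Ha^f(B)$. The main technical obstacle I foresee is the uniform control on slice geometry under translation of $V$ — conditions (1) and (2) are imposed only on $V$ itself, whereas the Fubini step requires comparable bounds on a positive-measure family of translates $V_t$ — and, intertwined with it, the careful verification that the general-dimension-function slicing inequality holds in the form needed, which is precisely where the monotonicity hypothesis on $r^{-2}f(r)$ is consumed.
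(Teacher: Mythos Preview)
The paper does not prove this theorem at all: Theorem~\ref{mtp} is quoted verbatim as a result of Beresnevich and Velani \cite{BV06_IMRN} and is used as a black box in the derivation of Theorem~\ref{thm:HM}. There is therefore no ``paper's own proof'' to compare against.

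That said, your outline is essentially the argument that \cite{BV06_IMRN} actually carries out: slice by translates of $V$, apply the one-dimensional Mass Transference Principle on each slice to pass from full Lebesgue measure of the $g(\Upsilon)$-limsup to full $\Ha^g$-measure of the $\Upsilon$-limsup, and then integrate the slices back up via a Marstrand-type inequality for general dimension functions to recover $\Ha^f$. The monotonicity of $r^{-2}f(r)$ is indeed exactly what is needed for that slicing inequality.

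One remark on the obstacle you flag at the end. Your worry about losing uniform transversality on the translates $V_t$ is not a genuine difficulty in this planar, linear setting. Condition~(2) forces $V$ and every $R_\alpha$ to be non-parallel with a uniform lower bound on the angle between them; since the $V_t$ are rigid translates of $V$, the angle between $V_t$ and $R_\alpha$ is independent of $t$, and two non-parallel lines in $\R^2$ always meet. Hence both conditions (1) and (2) hold for every $V_t$ with the \emph{same} constants as for $V$, and the uniform comparability $\mathrm{diam}(V_t\cap\Delta(R_\alpha,\delta))\asymp\delta$ is automatic. The real work, as you correctly identify, lies in the general-dimension-function slicing inequality and in the density promotion at the end.
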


We will apply Theorems 1a and 1b to ensure that Theorem \ref{mtp} may be applied. We consider the set $\Lambda(g(\Upsilon))$, where $\Upsilon = \psi(h)/h$  for $R_\alpha = R_{a,b}(c)$ and $h = h_{a,b}$. Up to a universal constant $c > 0$ the set $\Delta(R_\alpha, \Upsilon_\alpha)$ contains one of the strips
	$$
	\ell_{a,b}(c): \, =\, \left\{(x, y) \in [0,1)^2: \, \infabs{a^nx+b^my-c^p} < c \Upsilon_\alpha h_{a,b} \right\}.
	$$
Hence, to prove that the set $\Lambda(g(\Upsilon))$ is full with respect to $\Ha^2$, we need only ensure that the series 
\begin{equation*}
\sum_{(a,b) \in \N^2} \frac{c\Upsilon_\alpha h_{a,b}}{h_{a,b}^{1-1/p}} = c \sum_{(a,b) \in \N^2}  g\left(\frac{\psi(h_{a,b})}{h_{a,b}}\right)h_{a, b}^{1/p}
\end{equation*}
diverges, which is exactly our assumption.

It remains for us to find a line $V$ together with a restricted set of pairs $(a,b)$ satisfying the assumptions of Theorem \ref{mtp}. However, in our proof of Theorems 1a and 1b, we initially introduced the restriction \eqref{eqn:restrictions}, which implies that all lines $R_\alpha$ considered have slope in the interval from $-2$ to $-1/2$. Taking $J = \{(a,b) \in \N^2 : \gcd(a,b) = 1, \frac{1}{2} \le \frac{a^n}{b^m} \le 2\}$ and $V$ to be the line given by the equation $y=x$ clearly gives us properties (1) and (2) from Theorem \ref{mtp}.

We now have all assumptions satisfied and may conclude that for any ball $B$, 
\begin{equation*}
\Ha^f\left(B\cap \Lambda\left(\Upsilon\right)\right)=\Ha^f(B),
\end{equation*}
so that in particular
\begin{equation*}
\Ha^f\left(\Lambda\left(\Upsilon\right)\cap [0,1]^2 \right)=\Ha^f([0,1)^2).
\end{equation*}
It is now a simple matter to verify that $\Lambda\left(\Upsilon\right)\cap [0,1]^2 \subseteq W_{n,m}^p(\psi)$, and Theorem 2 follows. Finally, to deduce Corollaries \ref{cor:HDorder} \& \ref{cor:HD} from Theorem 2, we use standard arguments such as those exhibited in the proofs of Corollaries~3.3 \&~3.4 in \cite{BDoKL08}.

\def\cprime{$'$} \def\cprime{$'$} \def\cprime{$'$} \def\cprime{$'$}
  \def\cprime{$'$} \def\cprime{$'$} \def\cprime{$'$} \def\cprime{$'$}
  \def\cprime{$'$} \def\cprime{$'$} \def\cprime{$'$} \def\cprime{$'$}
  \def\cprime{$'$} \def\cprime{$'$} \def\cprime{$'$} \def\cprime{$'$}
  \def\cprime{$'$} \def\cprime{$'$} \def\cprime{$'$} \def\cprime{$'$}
  \def\cprime{$'$} \def\cprime{$'$}

\end{document}